\documentclass[12pt]{amsart}
\usepackage{amsmath,amssymb,amsbsy,amsfonts,latexsym,amsopn,amstext,cite,
                                               amsxtra,euscript,amscd,bm,mathabx}
\usepackage{url}
\usepackage[colorlinks,linkcolor=blue,anchorcolor=blue,citecolor=blue,backref=page]{hyperref}
\usepackage{color}
\usepackage{graphics,epsfig}
\usepackage{graphicx}
\usepackage{float} 
\usepackage[english]{babel} 
\usepackage{mathtools}
\usepackage{todonotes}
\usepackage{url}
\usepackage[colorlinks,linkcolor=blue,anchorcolor=blue,citecolor=blue,backref=page]{hyperref}

\usepackage[norefs,nocites]{refcheck}

\hypersetup{breaklinks=true}

\usepackage[norefs,nocites]{refcheck}
\usepackage[english]{babel}
\begin{document}

\newtheorem{thm}{Theorem}
\newtheorem{lem}[thm]{Lemma}
\newtheorem{claim}[thm]{Claim}
\newtheorem{cor}[thm]{Corollary}
\newtheorem{prop}[thm]{Proposition} 
\newtheorem{definition}[thm]{Definition}
\newtheorem{question}[thm]{Open Question}
\newtheorem{qn}[thm]{Question}
\newtheorem{conj}[thm]{Conjecture}
\newtheorem{prob}{Problem}

\theoremstyle{remark}
\newtheorem{rem}[thm]{Remark}

\newcommand{\GL}{\operatorname{GL}}
\newcommand{\SL}{\operatorname{SL}}
\newcommand{\lcm}{\operatorname{lcm}}
\newcommand{\ord}{\operatorname{ord}}
\newcommand{\Op}{\operatorname{Op}}
\newcommand{\Tr}{\operatorname{Tr}}
\newcommand{\Nm}{\operatorname{Nm}}

\numberwithin{equation}{section}
\numberwithin{thm}{section}
\numberwithin{table}{section}

\def\vol {{\mathrm{vol\,}}}
\def\squareforqed{\hbox{\rlap{$\sqcap$}$\sqcup$}}
\def\qed{\ifmmode\squareforqed\else{\unskip\nobreak\hfil
\penalty50\hskip1em\null\nobreak\hfil\squareforqed
\parfillskip=0pt\finalhyphendemerits=0\endgraf}\fi}

\def \balpha{\bm{\alpha}}
\def \bbeta{\bm{\beta}}
\def \bgamma{\bm{\gamma}}
\def \blambda{\bm{\lambda}}
\def \bchi{\bm{\chi}}
\def \bphi{\bm{\varphi}}
\def \bpsi{\bm{\psi}}
\def \bomega{\bm{\omega}}
\def \btheta{\bm{\vartheta}}
\def \ochi{\overline{\chi}}

\def\eps{\varepsilon}

\newcommand{\bfxi}{{\boldsymbol{\xi}}}
\newcommand{\bfrho}{{\boldsymbol{\rho}}}

\def\Kab{\sfK_\psi(a,b)}
\def\Kuv{\sfK_\psi(u,v)}
\def\SaUV{\cS_\psi(\balpha;\cU,\cV)}
\def\SaAV{\cS_\psi(\balpha;\cA,\cV)}

\def\SUV{\cS_\psi(\cU,\cV)}
\def\SAB{\cS_\psi(\cA,\cB)}

\def\Kmnp{\sfK_p(m,n)}

\def\KKap{\cH_p(a)}
\def\KKaq{\cH_q(a)}
\def\KKmnp{\cH_p(m,n)}
\def\KKmnq{\cH_q(m,n)}

\def\Klmnp{\sfK_p(\ell, m,n)}
\def\Klmnq{\sfK_q(\ell, m,n)}

\def \SALMNq {\cS_q(\balpha;\cL,\cI,\cJ)}
\def \SALMNp {\cS_p(\balpha;\cL,\cI,\cJ)}

\def \SACXMQX {\fS(\balpha,\bzeta, \bxi; M,Q,X)}

\def\SAMJp{\cS_p(\balpha;\cM,\cJ)}
\def\SAMJq{\cS_q(\balpha;\cM,\cJ)}
\def\SAqMJq{\cS_q(\balpha_q;\cM,\cJ)}
\def\SAJq{\cS_q(\balpha;\cJ)}
\def\SAqJq{\cS_q(\balpha_q;\cJ)}
\def\SAIJp{\cS_p(\balpha;\cI,\cJ)}
\def\SAIJq{\cS_q(\balpha;\cI,\cJ)}

\def\RIJp{\cR_p(\cI,\cJ)}
\def\RIJq{\cR_q(\cI,\cJ)}

\def\TWXJp{\cT_p(\bomega;\cX,\cJ)}
\def\TWXJq{\cT_q(\bomega;\cX,\cJ)}
\def\TWpXJp{\cT_p(\bomega_p;\cX,\cJ)}
\def\TWqXJq{\cT_q(\bomega_q;\cX,\cJ)}
\def\TWJq{\cT_q(\bomega;\cJ)}
\def\TWqJq{\cT_q(\bomega_q;\cJ)}

 \def \xbar{\overline x}
  \def \ybar{\overline y}

\def\cA{{\mathcal A}}
\def\cB{{\mathcal B}}
\def\cC{{\mathcal C}}
\def\cD{{\mathcal D}}
\def\cE{{\mathcal E}}
\def\cF{{\mathcal F}}
\def\cG{{\mathcal G}}
\def\cH{{\mathcal H}}
\def\cI{{\mathcal I}}
\def\cJ{{\mathcal J}}
\def\cK{{\mathcal K}}
\def\cL{{\mathcal L}}
\def\cM{{\mathcal M}}
\def\cN{{\mathcal N}}
\def\cO{{\mathcal O}}
\def\cP{{\mathcal P}}
\def\cQ{{\mathcal Q}}
\def\cR{{\mathcal R}}
\def\cS{{\mathcal S}}
\def\cT{{\mathcal T}}
\def\cU{{\mathcal U}}
\def\cV{{\mathcal V}}
\def\cW{{\mathcal W}}
\def\cX{{\mathcal X}}
\def\cY{{\mathcal Y}}
\def\cZ{{\mathcal Z}}
\def\Ker{{\mathrm{Ker}}}
\def\g{{\mathrm{gcd}}}

\def\NmQR{N(m;Q,R)}
\def\VmQR{\cV(m;Q,R)}

\def\Xm{\cX_m}

\def \A {{\mathbb A}}
\def \B {{\mathbb A}}
\def \C {{\mathbb C}}
\def \N {{\mathbb N}}
\def \F {{\mathbb F}}
\def \G {{\mathbb G}}
\def \L {{\mathbb L}}
\def \K {{\mathbb K}}
\def \PP {{\mathbb P}}
\def \Q {{\mathbb Q}}
\def \R {{\mathbb R}}
\def \Z {{\mathbb Z}}
\def \fS{\mathfrak S}

\def\e{{\mathbf{\,e}}}
\def\ep{{\mathbf{\,e}}_p}
\def\eq{{\mathbf{\,e}}_q}
\def\er{{\mathbf{\,e}}_R}
\def\esr{{\mathbf{\,e}}_r}
\def\\{\cr}
\def\({\left(}
\def\){\right)}
\def\fl#1{\left\lfloor#1\right\rfloor}
\def\rf#1{\left\lceil#1\right\rceil}

\def\Tr{{\mathrm{Tr}}}
\def\Nm{{\mathrm{Nm}}}
\def\Im{{\mathrm{Im}}}

\def \oF {\overline \F}

\newcommand{\pfrac}[2]{{\left(\frac{#1}{#2}\right)}}

\def \Prob{{\mathrm {}}}
\def\e{\mathbf{e}}
\def\ep{{\mathbf{\,e}}_p}
\def\epp{{\mathbf{\,e}}_{p^2}}
\def\em{{\mathbf{\,e}}_m}

\def\Res{\mathrm{Res}}
\def\Orb{\mathrm{Orb}}

\def\vec#1{\mathbf{#1}}
\def \va{\vec{a}}
\def \vb{\vec{b}}
\def \vm{\vec{m}}
\def \vu{\vec{u}}
\def \vv{\vec{v}}
\def \vx{\vec{x}}
\def \vy{\vec{y}}
\def \vz{\vec{z}}
\def\flp#1{{\left\langle#1\right\rangle}_p}
\def\T {\mathsf {T}}

\def\sfG {\mathsf {G}}
\def\sfK {\mathsf {K}}

\def\mand{\qquad\mbox{and}\qquad}

\title[Character sums with elliptic sequences]
{On the correlations between character sums of division polynomials under shifts}

\author{Subham Bhakta}
\address{School of Mathematics and Statistics, University of New South Wales, Sydney, NSW 2052, Australia.} 
\email{subham.bhakta@unsw.edu.au}

\author{Igor E. Shparlinski}
\address{School of Mathematics and Statistics, University of New South Wales, Sydney, NSW 2052, Australia.} 
\email{igor.shparlinski@unsw.edu.au}

\begin{abstract}
Let $E$ be an elliptic curve over the finite field $\mathbb{F}_p$, and $P \in E(\mathbb{F}_p)$ be an $\F_p$-rational point. We study the sums
\[
S_{\chi,P}(N,h) = \sum_{n=1}^N \chi(\psi_n(P)) \chi(\psi_{n+h}(P)),
\]
where $\psi_n(P)$ denotes the $n$-th division polynomial evaluated at $P$, and $\chi$ is a multiplicative character of $\F_p^{*}$. We estimate $S_{\chi,P}(N,h)$ on average over $h$ over 
a rather short interval $h \in [1, H]$. We also obtain a multidimensional generalisation of this result. 
\end{abstract} 

\subjclass[2020]{11G07, 11L40} 
\keywords{Character sums, elliptic curves, division polynomials}

\maketitle
\tableofcontents

\section{Introduction}
\subsection{Set-up and motivation}
Let $ E $ be an elliptic curve over $ \F_p $ of characteristic $ p > 3 $ given by the Weierstrass equation as following, with coefficients in $\F_p$.
\[
y^2=x^3+ax+b.
\]
For any integer $n\geq0$, define the $n$th division polynomial $\psi_n\in\F_p[x,y]$ as follows.
\begin{align*}
\psi_0 &= 0, \quad \psi_1 = 1, \quad \psi_2 = 2y, \\
\psi_3 &= 3x^4 + 6a x^2 + 12b x - a^2, \\
\psi_4 &= 4y(x^6 + 5a x^4 + 20b x^3 - 5a^2 x^2 - 4abx - 8b^2 - a^3),
\end{align*}
for more details, see~\cite[Exercise~3.7]{Silv2}, combined with the discussion in~\cite[Chapter~III.1]{Silv2}.

Denote by $ E(\F_p) $ the group of points on $ E $ defined over $ \F_p $. We can interpret each $\psi_n$ as rational functions on $E(\F_p)$. The multiplication of $P \in E$ by $n$ is given as a rational map by 
\begin{align*}
[n](P) &=
\biggl(
\frac{x(P)\psi_n^2(P)  - \psi_{n-1}(P)\psi_{n+1}(P)}{\psi_n(P)^2}, \\
& \qquad \qquad \frac{\psi_{n-1}(P)^2\psi_{n+2}(P) - \psi_{n-2}(P)\psi_{n+1}(P)^2}{4y(P)\psi_n(P)^3}
\biggr).
\end{align*}
It is well known that
\begin{align*}
\psi_{m+n}(P)&\psi_{m-n}(P)\psi_r^2(P)\\
& =\psi_{m+r}(P)\psi_{m-r}(P)\psi_n^2(P)-\psi_{n+r}(P)\psi_{n-r}(P)\psi_m(P)^2,
\end{align*}
for any integers $m,n,r$, see, for example,~\cite[Exercise~3.7(g)]{Silv2}.
 It turns out that $\psi_n(P)$ is periodic, and the period $T$, which  could be as large as $(p-1) \ord P$, 
 where $\ord P$ is the order of $P$ in the group  $ E(\F_p) $, see~\cite[Corollary~9]{Silv1}.   
 
Let $\chi$ be a multiplicative character of $\F_p^{*}$. We consider the sequence $\chi(\psi_n(P))$, where of course we set $\chi(0)=0$. Then, $\chi(\psi_n(P))$ is periodic with a much smaller period (see Section~\ref{sec:results}), compared to the period of $\psi_n(P)$. For points $P\in E(\F_p)$ of large orders, Shparlinski and Stange~\cite{ShSt} have studied the character sums of the form 
\[
S_{\chi,P}(N) =  \sum_{n=1}^N \chi(\psi_n(P)),
\]
for the quadratic character of $\F_p$ and optained a nontrivial bound provided $\ord P \ge N \ge p^{1/2 +\varepsilon}$ for some fixed $\varepsilon > 0$. 

Here, we   study a related but more difficult question on the correlation of $\chi(\psi_n(P))$ with its shifts, that is, about bounding the sums
\[
S_{\chi,P}(N,h)= \sum_{n=1}^N\chi(\psi_n(P))\chi(\psi_{n+h}(P))
\]
with an integer $h\ne 0$. 
 More precisely, we ask the following: 

\begin{qn}\label{qn:chowlaelliptic}
Let $p$ be a prime, $P\in E(\mathbb{F}_p)$ be a point and $\chi$ be a multiplicative character of $\F_p^{*}$. When do we have
\[
S_{\chi,P}(N,h)=o(N)?
\]
\end{qn}

Due to the limitations of our techniques, we do not have an affirmative answer to the question above. However, we prove an average version. For instance, we estimate the following sum
\[
\sum_{h=1}^H|S_{\chi,P}(N,h)|=\sum_{h=1}^H\left| \sum_{n=1}^N\chi(\psi_n(P))\chi(\psi_{n+h}(P))\right|.
\]

Question~\ref{qn:chowlaelliptic} itself, as well as our results, are motivated by the progress towards the
Chowla conjecture for $ \lambda(n)$ and $\mu(n)$, that is, 
for the Liouville and M{\"o}bius functions, respectively.

It is well-known that the prime number theorem is equivalent to any of the following 
statements
\[ \sum_{n=1}^N \lambda(n)=o(N) \quad \text{and} \quad   \sum_{n=1}^N \mu(n)=o(N),\qquad \text{as}\ N \to \infty, 
\]
Chowla~\cite{Chow} has conjectured that for any integer $h>0$, the following holds
\begin{equation}\label{eqn:ChowlaConj}
 \sum_{n=1}^N \lambda(n)\lambda(n+h)=o(N) \quad \text{and} \quad  \sum_{n=1}^N \mu(n)\mu(n+h)=o(N), 
\end{equation}
as $N \to \infty$. 

Despite several striking results towards these conjectures, both remain widely open, 
see~\cite{MRT, MRTTZ,Tao, TaTe1, TaTe2} and references therein. An averaged version of this conjecture has been studied in~\cite{MRT}, in fact for a more general class of multiplicative functions. Our function $\chi\(\psi_n(P)\)$ is not multiplicative, although it satisfies an \textit{almost multiplicative} property, as stated in Lemma~\ref{lem:mult}.  Thus,  Question~\ref{qn:chowlaelliptic} essentially studies an elliptic analogue of Chowla's conjectures~\eqref{eqn:ChowlaConj}, for the function $\chi\(\psi_n(P)\)$, and also remains widely open. 

\subsection{Notation} Throughout the whole article, as usual, the notations $U = O(V)$, $U \ll V$, and $ V\gg U$ are equivalent to $|U|\le c V$ for some positive constant $c$, which may depend on the  the order 
$d$ of the multiplicative character  $\chi$ and on the dimension $m$.

We use $\omega(n)$ and $\tau(n)$ to denote the number of distinct prime and positive integer factors of an integer $n\ne 0$ and use $\varphi(n)$ to denote the Euler function.

We also denote $\er(n) = \exp(2 \pi i n/R)$. 

Everywhere, we use $p$ to denote a prime, and $h,k,m,n$ are reserved for integers. 

\subsection{Main result}\label{sec:results}
Let $\chi$ be a multiplicative character on $\F_p^{*}$ of order $d$, and $P\in E(\mathbb{F}_p)$ be a point. Denote $\ord P$ be the order of $P$ in $E(\mathbb{F}_p)$, and set
\begin{equation}\label{eqn:R}
R=d  \ord P. 
\end{equation}
Applying~\cite[Lemma~3.1]{ShSt} for $s=d$, the sequences $\chi(\psi_n(P))$ is periodic with a period dividing $R$, as long as $\ord P\geq 3$.  

Here, given $m$ integers $h_1, \ldots, h_m$, we consider the following average values of multidimensional correlations
between the values of  $\chi(\psi_n(P))$:
    \begin{align*}
     & U_{m, \chi, P} (H,N)  =  \frac{1}{H^{m-1}}\sum_{h_2, \ldots, h_m=1}^H \left| \sum_{n=1}^N \chi(\psi_n(P)) \prod_{j=2}^m \chi(\psi_{n+h_j}(P))\right|, \\
& V_{m, \chi, P} (H, N)  =  \frac{1}{H^m} \sum_{h_1, \ldots, h_m=1}^H \left| \sum_{n=1}^N \prod_{j=1}^m \chi(\psi_{n+h_j}(P))\right|. 
\end{align*}

\begin{thm}\label{thm:intshift} 
For any integer $m\geq 2$, and integers $1\leq H,N\leq R$, 
we have
\begin{equation}\label{eq:B1}
\begin{split}
U_{m, \chi, P} (H,N)&, V_{m, \chi, P} (H,N)\\
&  \ll   H^{-1/8}R\exp\(O\((\log R)^{1/2}/\log \log R\)\) \\
& \qquad \qquad \qquad \qquad \qquad \quad + H^{1/2}R^{3/4}p^{1/8} \log R, 
\end{split}
\end{equation}
and  if 
$ R\ge p^{1/2}\exp\(2.1\log p/\log \log p\)$, 
then we also have 
\begin{equation}\label{eq:B2}
\begin{split}
U_{m, \chi, P} (H,N)&, V_{m, \chi, P} (H,N)\\
&   \ll H^{-1/4} R^{7/6}p^{1/24} (\log R) (\log \log R)^{1/6} .
\end{split}
\end{equation}
 \end{thm}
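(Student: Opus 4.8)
The plan is to reduce the multidimensional correlation sums $U_{m,\chi,P}$ and $V_{m,\chi,P}$ to one-dimensional completed sums over the full period $R$, then invoke multiplicative-character bounds. First I would use the almost multiplicativity of $\chi(\psi_n(P))$ (Lemma~\ref{lem:mult}) together with the well-known three-term relation for division polynomials quoted above to express the inner product $\chi(\psi_n(P))\prod_{j}\chi(\psi_{n+h_j}(P))$, after completing in $n$ to a sum over a full period of length $R$, in terms of values of $\chi$ at a product of shifted division polynomials; the periodicity with period dividing $R=d\ord P$ (from \cite[Lemma~3.1]{ShSt}) is what makes the completion harmless up to the already-present $\log R$ factors. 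Since $1\le H,N\le R$, completing the $n$-sum from length $N$ to length $R$ introduces the standard completion losses which account for the $\log R$ and the shape of the main terms; the averaging over the shifts $h_2,\dots,h_m$ (or $h_1,\dots,h_m$) is then handled by opening the absolute value with an auxiliary character/additive-character duality so that one is left estimating, on average, a character sum in which the shift variables run over the full box $[1,H]^{m-1}$ (resp.\ $[1,H]^m$).

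The core estimate is then a bound for a complete multiplicative character sum of the form $\sum_{x} \chi(F_1(x))\cdots\chi(F_k(x))$ where the $F_i$ are shifts (by quantities determined by the $h_j$ and by the point addition on $E$) of a fixed rational function built from division polynomials. Here I would split into two regimes exactly as in the statement. In the generic regime I expect to use a Weil-type bound for multiplicative character sums along the curve, giving square-root cancellation $O(R^{1/2})$ per complete sum up to a bounded number of "bad" shift-configurations; combined with the trivial bound on the diagonal (where the shifts collapse and no cancellation is available, contributing $H^{-1}$ times the number of such configurations times $R$) and the completion loss $p^{1/8}$-type factor coming from bounding incomplete sums via complete ones over $\F_p$, this produces the two terms $H^{-1/8}R\exp(O((\log R)^{1/2}/\log\log R))$ and $H^{1/2}R^{3/4}p^{1/8}\log R$ in \eqref{eq:B1}. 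The subexponential factor $\exp(O((\log R)^{1/2}/\log\log R))$ is the familiar divisor-type bound $\tau(\cdot)^{O(1)}$ or $R^{o(1)}$ controlling the number of degenerate shift tuples, so it should drop out of the analysis of the number of coincidences among the $h_j$ modulo $\ord P$.

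For the second bound \eqref{eq:B2}, valid when $R\ge p^{1/2}\exp(2.1\log p/\log\log p)$, the point is that this lower bound on $R$ relative to $p$ lets one afford a sharper treatment of the incomplete-to-complete passage: instead of the crude $p^{1/8}$ loss one uses the largeness of $\ord P$ to run the completion over a longer window, so the balance between the "cancellation" term and the "completion" term is struck at a different exponent, yielding $H^{-1/4}R^{7/6}p^{1/24}(\log R)(\log\log R)^{1/6}$. Technically I would reuse the same character-sum input but optimise the free parameter (the length of the completion window, or equivalently the modulus of the auxiliary additive character) under the constraint coming from $R$; the $(\log\log R)^{1/6}$ is the residue of optimising a $\log$-power against a small power of $R$.

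The main obstacle I anticipate is the complete multiplicative character sum estimate: one must verify that the rational function $\prod_j \psi_{n+h_j}(P)$, viewed through the point-addition formulas as a function of a single variable parametrising the cyclic group generated by $P$ (or of $x(P')$ as $P'$ ranges over $\langle P\rangle$), is not a perfect $d$-th power for all but a controlled set of shift tuples $(h_j)$ — this is exactly where one needs a genuine input about division polynomials (no spurious multiplicative relations among $\psi_{n+h}$ beyond the known recurrences), and it is what ultimately bounds the number of degenerate tuples by $R^{o(1)}$ per coordinate. Once that non-degeneracy is in hand, the rest is bookkeeping: completion, the two-parameter optimisation, and summing the geometric/arithmetic contributions over the shift box.
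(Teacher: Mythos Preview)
Your proposal misses the central mechanism of the argument and misidentifies the source of essentially every term in the bounds.

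The paper does \emph{not} attack the $m$-fold product $\prod_j \chi(\psi_{n+h_j}(P))$ directly. Instead it freezes all but one shift as bounded weights $|\alpha_n|\le 1$ and proves (Lemma~\ref{lem:twistby1bdd}) via a double application of Cauchy--Schwarz that
\[
\frac{1}{H}\sum_{h=1}^H\Bigl|\sum_{n=1}^R \alpha_n\,\chi(\psi_{n+h}(P))\Bigr|
\ll H^{-1/4}\bigl(R^{1/2}\,T_{\chi,P}(H)^{1/4}+R\bigr),
\]
where $T_{\chi,P}(H)=\sum_{h\le H}\bigl|\sum_{n\le R}\chi(\psi_n(P))\ochi(\psi_{n+h}(P))\bigr|^2$ is a \emph{single-shift} second moment. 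This is why the final bound is uniform in $m$; your plan to verify non-degeneracy of the full product $\prod_j \psi_{n+h_j}$ never arises and would in any case be considerably harder than the $(k,\ell)$ case actually needed (Lemma~\ref{lem:notpower}).

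The two bounds~\eqref{eq:B1} and~\eqref{eq:B2} then come from two genuinely different estimates for $T_{\chi,P}(H)$, neither of which matches your description. For~\eqref{eq:B1} one writes $h=ek$ with $e$ composed of primes dividing $R$ and $\gcd(k,R)=1$, replaces $n\mapsto kn$, uses almost multiplicativity, squares and applies Cauchy--Schwarz again, and invokes the Weil bound of Lemma~\ref{lem:key}. The factor $\exp\bigl(O((\log R)^{1/2}/\log\log R)\bigr)$ is \emph{not} a count of degenerate shift tuples: it is the sum $\sum_{e}e^{-1/2}$ over $R$-smooth integers $e$, bounded via the Euler product over primes dividing $R$. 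The $p^{1/8}$ is not a completion loss over $\F_p$; it is $p^{1/2}$ from Weil passed through the fourth root in the Cauchy--Schwarz chain. For~\eqref{eq:B2} there is no ``completion window'' parameter to optimise: one instead expands $T_{\chi,P}(R)$ by additive orthogonality modulo $R$ into a fourth-moment of the twisted complete sums $\sum_{n\le R}\chi(\psi_n(P))\er(\lambda n)$ and applies Lemma~\ref{lem:completesumd}, whose own proof requires the hypothesis $R\ge p^{1/2}\exp(2.1\log p/\log\log p)$ for the auxiliary averaging set $\cR_d$ to be large enough. The $\log R$ factor is the only piece coming from completion in $n$, and it enters at the very end.
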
 

\begin{rem}\label{rem:compl sums}
It is easy to see from the proof of Theorem~\ref{thm:intshift}  that for complete sums $N = R$, one can remove $\log R$ from the second term of the bound~\eqref{eq:B1} and from 
 the bound~\eqref{eq:B2}.
\end{rem}

To understand the strength of  Theorem~\ref{thm:intshift} we consider the extreme case of very 
long sums with $R \ge N \gg R$. Then the bound~\eqref{eq:B1} is nontrivial in the range
\[
\exp\(C(\log R)^{1/2}/\log \log R\) \le H \le  R^{1/2}  p^{-1/4} (\log R)^{-2} f(p)^{-1}
 \]
for some absolute constant $C> 0$ and any function $f(p)\to 0$
as $p\to \infty$. In fact, for any fixed $ \varepsilon> 0$, Remark~\ref{rem:withgamma} allows us to have a non-trivial bound in the extended lower range
$$H \ge \exp\(C_{\varepsilon}(\log R)^\varepsilon\),  
$$ 
where $C_{\varepsilon}>0$ is a constant depending on $\varepsilon$.

On the other hand,  the bound~\eqref{eq:B2} is nontrivial only when $H$ is quite large, 
namely  $R \ge H \ge  R^{2/3} p^{1/6}  (\log \log R)^{2/3}F(p) $ with any function $F(p)\to \infty$
as $p\to \infty$. 

Unfortunately there is a wide gap between this ranges as 
$$
R^{2/3} p^{1/6} /  R^{1/2}  p^{-1/4} \ge p^{5/12} R^{1/6} \gg p^{1/2}.
$$
(since the we have $R \ge p^{1/2}$ for either  of the 
bounds~\eqref{eq:B1} and~\eqref{eq:B2}  to be nontrivial). Closing this gap is certainly an interesting open question.

Another interesting regime is $H=N$. In this case the  bound~\eqref{eq:B1} is always trivial,  while~\eqref{eq:B2} requires  
$$
R \ge H=N \ge R^{14/15}p^{1/30} (\log R)^{4/5} (\log \log R)^{2/15}F(p)
$$ 
with $F(p)\to \infty$ as $p\to \infty$.

In a response to Question~\ref{qn:chowlaelliptic}, we have the following simple application of Theorem~\ref{thm:intshift}.
Namely,  taking $m=2$ and applying~\eqref{eq:B2}  we immediately derive the following result.

\begin{cor}\label{cor:ansqn}
    Let $\Delta > 0$ be an arbitrary real number. If $ R\ge p^{1/2}\exp\(2.1\log p/\log \log p\)$ then, we have
\begin{align*}
\sharp\,\left\{ 1 \leq h \leq H :~\left| S_{\chi, P, h}(N) \right| \ge  \Delta N\right\}&\\
\ll   \Delta^{-1} H^{3/4} N^{-1} &  R^{7/6}p^{1/24} (\log \log R)^{1/6} .
    \end{align*}
\end{cor}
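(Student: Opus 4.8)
The plan is to deduce Corollary~\ref{cor:ansqn} from the bound~\eqref{eq:B2} of Theorem~\ref{thm:intshift} via a straightforward Markov-type (first-moment) argument, specialised to $m=2$. Recall that for $m=2$ we have $S_{\chi,P}(N,h) = \sum_{n=1}^N \chi(\psi_n(P))\chi(\psi_{n+h}(P))$, which in the notation of the theorem is exactly the inner sum defining $U_{2,\chi,P}(H,N)$, so that
\[
U_{2,\chi,P}(H,N) = \frac{1}{H}\sum_{h=1}^H |S_{\chi,P}(N,h)|.
\]
(Here I am assuming the notation $S_{\chi,P,h}(N)$ appearing in the corollary is simply an alternative typographical rendering of $S_{\chi,P}(N,h)$; if not, the reconciliation of notation is the only extra bookkeeping needed.)

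First I would let $\mathcal{B} = \{1\le h\le H : |S_{\chi,P}(N,h)|\ge \Delta N\}$ be the ``bad'' set whose cardinality we wish to bound, and write $B = \sharp\,\mathcal{B}$. By positivity and the defining inequality on $\mathcal{B}$,
\[
\Delta N \cdot B \le \sum_{h\in\mathcal{B}} |S_{\chi,P}(N,h)| \le \sum_{h=1}^H |S_{\chi,P}(N,h)| = H\cdot U_{2,\chi,P}(H,N).
\]
Next I would invoke~\eqref{eq:B2} with $m=2$, which is applicable precisely under the hypothesis $R\ge p^{1/2}\exp(2.1\log p/\log\log p)$ assumed in the corollary, and also requires $1\le H,N\le R$ (this last constraint is implicit and should perhaps be stated; it is harmless since the conclusion is vacuous or trivial otherwise). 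This gives
\[
U_{2,\chi,P}(H,N) \ll H^{-1/4} R^{7/6} p^{1/24} (\log R)(\log\log R)^{1/6}.
\]

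Combining the two displays yields
\[
B \le \frac{H}{\Delta N}\, U_{2,\chi,P}(H,N) \ll \Delta^{-1} H^{3/4} N^{-1} R^{7/6} p^{1/24} (\log R)(\log\log R)^{1/6},
\]
which is the claimed estimate up to the factor $\log R$. To remove it, I would appeal to Remark~\ref{rem:compl sums}: for complete sums $N=R$ one may drop the $\log R$ from~\eqref{eq:B2}; alternatively, one inspects the proof of Theorem~\ref{thm:intshift} to see that the $\log R$ factor there comes from incomplete-sum completion, and in the stated corollary the exact shape with $(\log\log R)^{1/6}$ but no $\log R$ suggests the authors are applying the sharpened complete-sum form or simply absorbing $\log R$ into the implied constant under an additional mild assumption; in any case, I would present both the version with $\log R$ and note that it can be removed exactly as in Remark~\ref{rem:compl sums}. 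There is no real obstacle here: the entire argument is a one-line Markov inequality, and the only thing to be careful about is matching the normalisation (the factor $H^{-(m-1)}=H^{-1}$ in the definition of $U_{2,\chi,P}$) so that the power of $H$ comes out as $H^{3/4}$ rather than $H^{7/4}$, and making sure the hypotheses of Theorem~\ref{thm:intshift} needed for~\eqref{eq:B2} are all in force.
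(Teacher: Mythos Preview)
Your proposal is correct and matches the paper's approach exactly: the paper says only that one takes $m=2$ and applies~\eqref{eq:B2}, which is precisely your Markov-inequality argument on $U_{2,\chi,P}(H,N)$. Your observation about the extra factor $\log R$ is also well taken; the bound~\eqref{eq:B2} as stated does carry $\log R$, so the corollary as printed appears to omit it (or, as you note, implicitly uses the sharpening from Remark~\ref{rem:compl sums}).
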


We note that, ignoring logarithmic terms, Corollary~\ref{cor:ansqn} produces a meaningful 
result if $H^{1/4} N \ge R^{7/6}p^{1/24+\varepsilon}$ for some fixed $\varepsilon> 0$.

\section{Preliminaries}\label{sec:elliptic}

\subsection{Some properties of   division polynomials} 

We have an \textit{almost multiplicative} nature of $\chi(\psi_n(P))$, which is given in~\cite[Lemma~3.2]{ShSt}. 

\begin{lem}\label{lem:mult}
    Let $\chi$ be a multiplicative character. Then for any integers $m,n$   we have
\[
\chi(\psi_{mn}(P))=\chi(\psi_m(nP))\chi(\psi_n(P))^{m^2}.
\]
\end{lem}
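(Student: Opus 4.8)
The plan is to derive the statement from the purely algebraic identity of division polynomials
\[
\psi_{mn}(P) = \psi_m(nP)\,\psi_n(P)^{m^2},
\]
valid for all integers $m, n$, after which the lemma follows at once by applying $\chi$ and using complete multiplicativity of $\chi$ together with the convention $\chi(0) = 0$. So the content is this displayed identity, which is the scaling property of the elliptic divisibility sequence attached to $P$.

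I would fix $n$ and regard everything as a sequence in $m$; write $W_k = \psi_k(P)$ and $\widetilde{W}_k = \psi_k(nP)$. Both families satisfy the identity of \cite[Exercise~3.7(g)]{Silv2} recalled in the introduction: for $(W_k)$ this is that identity for $P$, and for $(\widetilde{W}_k)$ it is the same identity with $P$ replaced by the point $nP$ (the identity holding at the level of rational functions on $E$). Assume first that $\ord P \nmid n$, so that $W_n = \psi_n(P) \ne 0$, and set $V_m = W_{mn}/W_n^{m^2}$. Applying \cite[Exercise~3.7(g)]{Silv2} with the triple $(an, bn, cn)$ in place of $(m, n, r)$ and dividing through by $W_n^{2a^2 + 2b^2 + 2c^2}$ shows that $(V_m)_m$ satisfies the very same identity. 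Moreover $V_0 = 0 = \widetilde{W}_0$, $V_1 = 1 = \widetilde{W}_1$, and, using $\psi_{2n}(P) = \psi_n(P)\bigl(\psi_{n+2}(P)\psi_{n-1}(P)^2 - \psi_{n-2}(P)\psi_{n+1}(P)^2\bigr)/\psi_2(P)$ (itself a specialisation of \cite[Exercise~3.7(g)]{Silv2}) together with the formula for the $y$-coordinate of $nP$ recorded in the introduction,
\[
V_2 = \frac{\psi_{2n}(P)}{\psi_n(P)^4} = \frac{\psi_{n+2}(P)\psi_{n-1}(P)^2 - \psi_{n-2}(P)\psi_{n+1}(P)^2}{2y(P)\,\psi_n(P)^3} = 2y(nP) = \widetilde{W}_2;
\]
the analogous computations for $V_3$ and $V_4$, which also invoke the formula for the $x$-coordinate of $nP$, give $V_3 = \widetilde{W}_3$ and $V_4 = \widetilde{W}_4$. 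Since an elliptic divisibility sequence is determined by its values at $0, 1, 2, 3, 4$, we conclude $V_m = \widetilde{W}_m$ for all $m$, which is the asserted identity when $\ord P \nmid n$.

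It remains to dispose of the degenerate case $\ord P \mid n$, equivalently $\psi_n(P) = 0$. Then the divisibility $\psi_n \mid \psi_{mn}$ of division polynomials forces $\psi_{mn}(P) = 0$ as well, so in the claimed identity for $\chi$ both sides vanish (the right-hand side because it contains the factor $\chi(\psi_n(P))^{m^2} = 0$), while the case $m = 0$ is trivial since $\psi_0 = 0$. Applying $\chi$ to the algebraic identity then proves the lemma in all cases. The only genuinely delicate point is this accounting at torsion points: one must interpret ``$\psi_m(nP)$'' and the identity through the underlying equality of rational functions (cleared of denominators) rather than as a naive pointwise evaluation, so that the formula stays meaningful exactly where various $\psi_k(P)$ vanish --- which is also where the recursion used to propagate from the first few terms would otherwise require division by zero. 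This is the main obstacle; everything else is bookkeeping with the addition formula already recorded.
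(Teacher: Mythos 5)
The paper does not prove this lemma at all: it is quoted verbatim from~\cite[Lemma~3.2]{ShSt}, so there is no internal argument to compare against. Your proof is the standard one behind that citation, namely reducing everything to the scaling identity $\psi_{mn}(P)=\psi_m(nP)\,\psi_n(P)^{m^2}$ and then applying complete multiplicativity of $\chi$ with the convention $\chi(0)=0$; your verification that $V_m=W_{mn}/W_n^{m^2}$ satisfies the same quadratic identity (the exponents $(a+b)^2+(a-b)^2+2c^2=2a^2+2b^2+2c^2$ do match across all three terms) and agrees with $\psi_m(nP)$ for $m=0,1,2$ is correct, and the degenerate case $\ord P\mid n$ is disposed of properly. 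The one step you should tighten is the appeal to ``an elliptic divisibility sequence is determined by its values at $0,1,2,3,4$'': propagating the even-index terms via the recursion requires dividing by $W_2$, so the uniqueness claim can fail pointwise when $\psi_2(nP)=2y(nP)=0$ (i.e.\ when $nP$ is $2$-torsion), and likewise your computation of $V_2$ divides by $y(P)$ and $\psi_n(P)$. You flag this yourself, but the clean way to close it is to run the whole argument at the generic point, proving $\psi_{mn}=(\psi_m\circ[n])\cdot\psi_n^{m^2}$ as an identity in the function field $\F_p(E)$ (where $\psi_2=2y$ and each $\psi_n$ are nonzero functions, so all divisions are legitimate), and only then evaluate at $P$, treating the vanishing cases exactly as you do in your final paragraph. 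With that rearrangement the proof is complete; as it stands it is the right approach with a small gap in the specialisation bookkeeping.
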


As usual, for any $\Psi\in \F_p(E)$, we define
\[
\deg \Psi=\sum_{\substack{P\in E(\overline{\F_p})\\ \nu_P(\Psi)>0}} \nu_{P}(\Psi),
\]
where  $\nu_{P}(\Psi)$ is the multiplicity of $P$ as a zero of $\Psi$ (and thus this is a finite sum over 
all zeros $P$ of $\Psi$). 

Moreover, we also need the following results to show that a certain class of functions satisfies the requirement of Lemma~\ref{lem:key} below. 

\begin{lem}\label{lem:notpower}
Let $m$ be any integer such that $\mathrm{ord}(P)\not\mid m$. For positive integers $k \neq \ell$ with $\gcd(k\ell ,R)=1$, we consider the function
\[
\Psi_P(Q)=\psi_{k}(Q)\cdot \psi_{k}^{-1}(Q+mP)\cdot \psi^{-1}_{\ell}(Q)\cdot \psi_{\ell}(Q+mP).
\]
Then,
\begin{enumerate}
\item[(i)] $\deg \Psi_P < 2(k^2+ \ell^2)$; 
    \item[(ii)] $\Psi_P$ is not a non-trivial power of any function in the function field $\overline{\F_p(E)}$.
    \end{enumerate}
\end{lem}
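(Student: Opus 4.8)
The plan is to write down the divisor of $\Psi_P$ on $E$ over $\overline{\F_p}$ and read off both assertions from it. The one classical input I would invoke is the divisor of a division polynomial: viewed as an element of $\overline{\F_p}(E)$,
\[
\operatorname{div}(\psi_n)=\sum_{T\in E[n]}(T)-n^2(O),
\]
so every nonzero $n$-torsion point is a simple zero of $\psi_n$, the identity $O$ is a pole of order $n^2-1$, and $\deg\psi_n=n^2-1$ (this exact identity uses $\gcd(n,p)=1$; for $p\mid n$ one still has $\deg\psi_n\le n^2-1$, which is all part~(i) needs). Translating the variable, the function $Q\mapsto\psi_n(Q+mP)$ has divisor $\sum_{T\in E[n]}(T-mP)-n^2(-mP)$. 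Substituting these four divisors into the definition of $\Psi_P$ yields an explicit formula for $\operatorname{div}(\Psi_P)$, supported on $(E[k]\cup E[\ell])\cup(E[k]-mP)\cup(E[\ell]-mP)\cup\{O,-mP\}$.

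For part~(i): since $\sum_Q\nu_Q(g)=0$ for every nonzero $g\in\overline{\F_p}(E)$, bounding $\deg\Psi_P$, the sum of the positive multiplicities, is the same as bounding the degree of the polar divisor of $\Psi_P$. Every pole of $\Psi_P$ is the pole of $\psi_k$, a zero of $\psi_k(Q+mP)$, a zero of $\psi_\ell$, or the pole of $\psi_\ell(Q+mP)$, whence
\[
\deg\Psi_P\le (k^2-1)+(k^2-1)+(\ell^2-1)+(\ell^2-1)=2(k^2+\ell^2)-4<2(k^2+\ell^2).
\]

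For part~(ii): a nontrivial $e$-th power has all of its divisor multiplicities divisible by $e$, so it suffices to exhibit one point at which $\Psi_P$ has multiplicity $\pm1$. Since $k\neq\ell$, fix a prime $q$ with $v_q(k)\neq v_q(\ell)$, and after replacing $\Psi_P$ by $\Psi_P^{-1}$ if needed (which only interchanges $k$ and $\ell$ and preserves being, or not being, a power) assume $a:=v_q(k)>v_q(\ell)$, so $a\ge 1$. Choose $T_0\in E(\overline{\F_p})$ of exact order $q^a$; here $\gcd(k,R)=1$ is used, together with the standard structure $E[q^a]\cong(\Z/q^a)^2$ when $q\ne p$. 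Then $T_0\in E[k]$ while $T_0\notin E[\ell]$ (as $q^a\nmid\ell$), and $T_0\notin\{O,-mP\}$ since $\ord T_0=q^a>1$ is coprime to $\ord P$ (because $q\mid k$ and $\gcd(k,R)=1$). It remains to check that $\psi_k(Q+mP)$ and $\psi_\ell(Q+mP)$ are nonzero at $Q=T_0$, i.e.\ $T_0+mP\notin E[k]$ and $T_0+mP\notin E[\ell]$. If $T_0+mP\in E[k]$, then $kmP=k(T_0+mP)=O$, and $\gcd(k,\ord P)=1$ forces $\ord P\mid m$, contradicting the hypothesis $\ord P\nmid m$. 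If $T_0+mP\in E[\ell]$, then $\ell T_0=-\ell mP$, which is impossible because the left-hand side has order $q^{a-v_q(\ell)}>1$, a power of $q$, whereas the right-hand side has order coprime to $q$. Hence $\nu_{T_0}(\Psi_P)=\nu_{T_0}(\psi_k)=1$, so $\Psi_P$ is not a nontrivial power in $\overline{\F_p}(E)$; this is exactly what is needed to apply Lemma~\ref{lem:key}.

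The heart of the argument is the divisor bookkeeping in part~(ii): one must arrange that all torsion contributions at $T_0$ cancel except the single simple zero coming from $\psi_k$, and this hinges entirely on the two coprimality-type conditions $\gcd(k\ell,R)=1$ and $\ord P\nmid m$. The genuinely delicate spot, and the main obstacle, is the case in which the distinguishing prime $q$ equals the characteristic $p$ (which can happen when $k$ and $\ell$ differ only at $p$): then $E[p^a]$ need not be $(\Z/p^a)^2$ and $\psi_k$ acquires higher-order zeros at $p$-power torsion, so one must either select another prime distinguishing $k$ and $\ell$, or argue directly from the ordinary/supersingular structure of the $p$-power torsion on $E$.
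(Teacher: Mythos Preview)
Your argument is correct and follows the same underlying idea as the paper's proof: locate a point at which $\Psi_P$ has a simple zero, which is incompatible with $\Psi_P$ being a nontrivial power. The paper reaches this point by a counting argument---assuming $k>\ell$, it observes that $\psi_k$ has more zeros than $\psi_\ell$ and hence some zero $Q_0$ of $\psi_k$ avoids the zeros of $\psi_\ell$, and then rules out $\psi_k(Q_0+mP)=0$ and $\psi_\ell(Q_0+mP)=0$ exactly as you do, deducing $k\ell mP=O$ and invoking $\gcd(k\ell,\ord P)=1$ together with $\ord P\nmid m$. Your route is more constructive: you pick a prime $q$ with $v_q(k)\neq v_q(\ell)$ and take $T_0$ of exact order $q^a$, which makes the verification that $T_0\notin E[\ell]$ and $T_0+mP\notin E[\ell]$ completely transparent via order considerations. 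Both approaches are equally short; yours has the virtue of making the role of each hypothesis explicit. As for the caveat you raise about $q=p$: this is a fair observation, and the paper's proof is subject to the same issue, since its claim that ``$\psi_k$ has $k^2-1$ simple roots'' already presupposes $p\nmid k$. In the application (Lemma~\ref{lem:smallh}) one has $\gcd(k\ell,R)=1$, so whenever $p\mid R$ the problem evaporates; in the residual case one can argue via the ordinary/supersingular dichotomy, but neither proof spells this out.
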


\begin{proof}
Proof of~(i) is clear, as each $\psi_h$ has $h^2-1$ simple zeroes,
see~\cite[Exercise~III.3.7]{Silv2}.  

To prove~(ii),  let us assume that $\Psi_P = G^\nu$ for some $\nu \geq 2$ and $G \in \overline{\mathbb{F}_p(E)}$. In particular, the function
\[
\widetilde{\Psi}_P(Q)=\Psi_P(Q) \cdot \psi^{2}_{\ell}(Q)\cdot \psi^{2}_{k}(Q+mP)
\]
has the property that each zero of $\widetilde{\Psi}_P$ has multiplicity at least two. 

Without loss of generality, let us assume that $k>\ell$. Since, as we have mentioned, the polynomial $\psi_{k}$ has $k^2-1>1$ simple roots in $\overline{\mathbb{F}_p}$, each root $Q_0$  of $\psi_{k}$, is also a root at least one of the $\psi_{k}(Q_0+mP)$, $\psi_{\ell}(Q_0)$ or $\psi_{\ell}(Q_0+mP)$ is zero.

Of course, we have $k^2-\ell^2\geq 3$. Therefore, we can choose $Q_0$ so that $\psi_{\ell}(Q_0) \neq 0$. Now, if either of $\psi_{k}(Q_0+mP)= 0$ or $\psi_{\ell}(Q_0+mP)= 0$ is zero and thus $k \ell m P = 0$.  However, since $\ord P\not\mid m$  this contradicts our assumption $\gcd(k\ell,\ord P)=1$  and 
concludes the proof. 
\end{proof}
\subsection{Some character sums with division polynomials} 

We now  derive from~\cite[Lemma~4.2]{ShSt}. 

\begin{lem}\label{lem:key}
Let $\chi$ be any non-principal multiplicative character on $\F_p^{*}$, $R$ be as in~\eqref{eqn:R}, and $\Psi\in \F_p(E)$ be of degree $\deg \Psi$. Assume that $\Psi$ is not a non-trivial power of a function in $\overline{\F_p(E)}$. Then, we have the following estimate for any integer $a$
$$\sum_{n=1}^R \chi(\Psi(nP))\er(an)\ll  \deg \Psi \cdot \sqrt{p}.$$
\end{lem}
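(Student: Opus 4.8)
The plan is to collapse the additive twist so that the sum becomes a character sum over the cyclic group $\langle P\rangle$, then to read that sum off a curve and apply Weil's bound; this is precisely the mechanism of~\cite[Lemma~4.2]{ShSt}, and the point is that its proof only uses $\deg\Psi$ and the non-power hypothesis. \emph{Step 1 (collapsing the twist).} Since $nP$ depends only on $n \bmod \ord P$ and $R = d\ord P$, the sequence $\chi(\Psi(nP))$ has period dividing $\ord P$. Writing $n = r + j\ord P$ with $0\le r<\ord P$ and $0\le j<d$, we have $\er(an) = \er(ar)\exp(2\pi i aj/d)$, so the sum over $j$ vanishes unless $d\mid a$, while for $a = db$ one gets
\[
\sum_{n=1}^R\chi(\Psi(nP))\er(an) = d\sum_{r=0}^{\ord P-1}\chi(\Psi(rP))\,\e_{\ord P}(br).
\]
As $d = O(1)$, it remains to bound the inner sum $W$ by $O\((\deg\Psi)\sqrt p\)$.

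\emph{Step 2 (passing to a curve).} Attach to $\Psi$ the $\mu_d$-cover $\mathcal C\to E$ with function field $\overline{\F_p(E)}(\Psi^{1/d})$; by the hypothesis that $\Psi$ is not a non-trivial power in $\overline{\F_p(E)}$ this cover is geometrically irreducible, it is ramified only above the $O(\deg\Psi)$ zeros and poles of $\Psi$, and Riemann--Hurwitz gives $g(\mathcal C)\ll\deg\Psi$. For $\Psi(Q)\notin\{0,\infty\}$ the value $\chi(\Psi(Q))$ is a root-of-unity weight recorded by Frobenius on the fibre of $\mathcal C$ over $Q$, the finitely many remaining $Q$ costing at most $O(\deg\Psi)$. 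The additive character $r\mapsto\e_{\ord P}(br)$ of $\langle P\rangle = \ker\pi$, where $\pi\colon E\to E/\langle P\rangle$ is the quotient isogeny, corresponds under the Weil pairing on $\pi$ to a rank-one sheaf $\cL_b$ on $E$ that is \emph{lisse on all of $E$}, being pulled back from $E/\langle P\rangle$ along the \'etale map $\pi$. Expanding $\mathbf 1_{\langle P\rangle}$ over the (likewise lisse) characters of $E(\F_p)/\langle P\rangle$ writes $W$ as an $O(1)$-term combination of sums $\sum_{Q\in E(\F_p)}\chi(\Psi(Q))\cL(Q)$ with $\cL$ lisse on $E$.

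\emph{Step 3 (Weil bound, and the main obstacle).} Each such sum is $\ll(\deg\Psi)\sqrt p$ by Weil's and Bombieri's estimates for exponential sums over curves, uniformly in $\cL$: the crucial point is that a rank-one sheaf lisse on the genus-one curve $E$ makes no contribution to the Euler-characteristic bound, so although $\cL_b$ can have order comparable to $p$ it does not weaken the estimate, which is governed only by $g(\mathcal C)\ll\deg\Psi$ and the $O(\deg\Psi)$ points of ramification of $\chi\circ\Psi$. Combining the three steps gives the claimed bound. I expect the two delicate points to be the geometric (not merely $\F_p$-rational) irreducibility of $\mathcal C$ — which for the functions $\Psi_P$ of interest is exactly what Lemma~\ref{lem:notpower}(ii) supplies — and the verification that the additive twist enters only through an everywhere-lisse sheaf, which is what keeps the final bound at $(\deg\Psi)\sqrt p$ rather than something weaker; both of these, together with the reduction from $\langle P\rangle$ to $E(\F_p)$, are carried out in~\cite[Lemma~4.2]{ShSt}, whose argument then applies verbatim once these hypotheses on $\Psi$ are in place.
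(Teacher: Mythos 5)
Your argument is correct and essentially the same as the paper's: both exploit that $\chi(\Psi(nP))$ is periodic in $n$ with period dividing $\ord P$ to unfold the length-$R$ sum into complete sums of length $\ord P$ against an additive character modulo $\ord P$, and then conclude by citing~\cite[Lemma~4.2]{ShSt} (the paper partitions $n$ into residue classes modulo $d$ rather than using your orthogonality-in-$j$ collapse, but this difference is cosmetic). Your Steps~2 and~3 merely sketch the internals of the cited lemma, which the paper treats as a black box.
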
 

\begin{proof}
    Since $R=dr$, where $r=\ord P$, we can write
    \begin{align*}
    \sum_{n=1}^R \chi(\Psi(nP))\er(an) & =\sum_{k = 0}^{d-1} \sum_{n=1}^r \chi(\Psi((nd-k)P))\er(a(nd-k))\\
    &=\sum_{k = 0}^{d-1} \er(-ak) \sum_{n=1}^r \chi(\Psi((nd-k)P))\esr(an).
    \end{align*}
    The result now follows by applying~\cite[Lemma~4.2]{ShSt} to each of the inner sums over $n$.  
\end{proof}

Consequently, we deduce the following estimate.

\begin{lem}\label{lem:completesumd}
Assume that $R\ge p^{1/2}\exp\(2.1\log p/\log \log p\)$. For any integer $a$, we have
    $$ \sum_{n=1}^R \chi (\psi_n(P)) \er(a n)\ll 
    p^{1/12}R^{5/6}(\log \log R)^{1/3}.$$
\end{lem}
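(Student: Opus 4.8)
The plan is to bound $W(a):=\sum_{n=1}^{R}\chi(\psi_n(P))\,\er(an)$ by exploiting the almost-multiplicativity of Lemma~\ref{lem:mult} through a dilation-and-average argument; recall that $n\mapsto\chi(\psi_n(P))$ has period dividing $R$ (we assume $\ord P\geq 3$, and $\chi$ non-principal, as Lemma~\ref{lem:key} requires). Fix a parameter $L$, to be chosen at the end, and let
\[
\mathcal{L}=\{\,\ell\in\Z:\ 1<\ell\leq L,\ \ell\equiv 1\pmod d,\ \gcd(\ell,R)=1\,\}.
\]
For each $\ell\in\mathcal{L}$ the map $n\mapsto \ell n$ permutes the residues modulo $R$, so $W(a)=\sum_{n=1}^{R}\chi(\psi_{\ell n}(P))\,\er(a\ell n)$; applying Lemma~\ref{lem:mult} to the pair $(\ell,n)$ and using $\ell^{2}\equiv 1\pmod d$ (so that $\chi(\psi_n(P))^{\ell^{2}}=\chi(\psi_n(P))$) yields the identity
\[
W(a)=\sum_{n=1}^{R}\chi(\psi_\ell(nP))\,\chi(\psi_n(P))\,\er(a\ell n),\qquad \ell\in\mathcal{L}.
\]

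\textbf{Averaging and Cauchy--Schwarz.} Averaging the identity over $\ell\in\mathcal{L}$, then applying Cauchy--Schwarz in $n$ together with the trivial bound $\sum_{n=1}^{R}|\chi(\psi_n(P))|^{2}\leq R$, gives
\[
|\mathcal{L}|^{2}\,|W(a)|^{2}\leq R\sum_{\ell,k\in\mathcal{L}}\ \sum_{n=1}^{R}\chi(\psi_\ell(nP))\,\overline{\chi(\psi_k(nP))}\,\er\!\bigl(a(\ell-k)n\bigr).
\]
The $|\mathcal{L}|$ diagonal pairs $\ell=k$ contribute at most $R\cdot|\mathcal{L}|\cdot R$. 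For $\ell\neq k$, writing $\overline{\chi(w)}=\chi(w^{d-1})$ we have $\chi(\psi_\ell(nP))\overline{\chi(\psi_k(nP))}=\chi\bigl(\Psi_{\ell,k}(nP)\bigr)$ with $\Psi_{\ell,k}=\psi_\ell\,\psi_k^{d-1}\in\F_p(E)$. Arguing as in Lemma~\ref{lem:notpower} (and more simply, since there is no shift), $\deg\Psi_{\ell,k}=(\ell^{2}-1)+(d-1)(k^{2}-1)\ll L^{2}$, and $\Psi_{\ell,k}$ is not a non-trivial power in $\overline{\F_p(E)}$: its zeros lie on $(E[\ell]\cup E[k])\setminus\{O\}$ with multiplicities in $\{1,d-1,d\}$, and either a zero of multiplicity $1$ occurs (when $\ell\nmid k$) or the coprime multiplicities $d$ and $d-1$ both occur (when $\ell\mid k$). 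Hence Lemma~\ref{lem:key} bounds each off-diagonal inner sum by $\ll L^{2}\sqrt p$, so the off-diagonal contributes $\ll R\,|\mathcal{L}|^{2}L^{2}\sqrt p$. Altogether,
\[
|W(a)|\ \ll\ \frac{R}{|\mathcal{L}|^{1/2}}+R^{1/2}L\,p^{1/4}.
\]

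\textbf{The size of $\mathcal{L}$ and the choice of $L$.} A routine sieve gives $|\mathcal{L}|=\frac{L}{d}\prod_{q\mid R,\,q\nmid d}(1-1/q)+O(\tau(R))$, and since $\prod_{q\mid R}(1-1/q)=\varphi(R)/R\gg 1/\log\log R$ by Mertens' estimate, we get $|\mathcal{L}|\gg L/\log\log R$ as soon as $L\gg\tau(R)\log\log R$. Taking $L=\bigl(R\,p^{-1/2}\log\log R\bigr)^{1/3}$ then balances the two terms above and gives $|W(a)|\ll R^{5/6}p^{1/12}(\log\log R)^{1/3}$, as claimed. The admissibility condition $L\gg\tau(R)\log\log R$ holds throughout the stated range: by the Hasse bound $R=d\ord P<2p^{2}$, so $\log R\asymp\log p$, $\log\log R\asymp\log\log p$ and $\tau(R)=\exp(O(\log p/\log\log p))$, whereas the hypothesis $R\geq p^{1/2}\exp(2.1\log p/\log\log p)$ forces $L\geq (R/p^{1/2})^{1/3}\geq\exp(0.7\log p/\log\log p)$, which comfortably dominates $\tau(R)\log\log R$ (and also makes the final bound non-trivial).

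\textbf{Main obstacle.} The one substantive point is to average over \emph{all} integers $\ell$ in the class $1\bmod d$ coprime to $R$, rather than over primes: the only loss in $|\mathcal{L}|$ is then the factor $R/\varphi(R)\ll\log\log R$ coming from the coprimality condition, instead of the $\log L$ that a prime count would cost, and this is exactly what turns a $(\log R)^{1/3}$ into the stated $(\log\log R)^{1/3}$. The remaining ingredients — verifying the non-power property of $\Psi_{\ell,k}$ uniformly over all pairs $\ell\neq k$ (including non-coprime ones), handling the $O(\tau(R))$ sieve error against the lower bound on $R$, and using $R<2p^{2}$ to keep the size estimates under control — are routine.
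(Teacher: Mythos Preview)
Your proof is correct and follows essentially the same route as the paper's: average over dilations $\ell\equiv 1\pmod d$ with $\gcd(\ell,R)=1$, use Lemma~\ref{lem:mult} to factor $\chi(\psi_{\ell n}(P))$, apply Cauchy--Schwarz, invoke Lemma~\ref{lem:key} on the off-diagonal, and optimise $L$. The only cosmetic difference is that you encode the off-diagonal function as $\Psi_{\ell,k}=\psi_\ell\psi_k^{d-1}$ whereas the paper uses the rational form $\varPsi_{r_1,r_2}=\psi_{r_1}\psi_{r_2}^{-1}$; since $\chi$ has order $d$ these give the same character values, the degree bounds are of the same order $\ll L^2$, and your multiplicity analysis for the non-power condition (the $\{1,d-1,d\}$ case split) is a valid substitute for the paper's simpler observation that $\varPsi_{r_1,r_2}$ has a simple zero.
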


\begin{proof}
   The proof proceeds in the same manner as that of~\cite[Theorem~5.1]{ShSt}; for completeness, we briefly sketch the argument. The only modification involves the choice of the set of integers
$$
 \cR_d = \left\{1\le r \le L:~r \equiv 1 \bmod {d},\  \gcd(r,R) = 1\right\},
$$
for a suitably chosen parameter $L$. 
Let $Q$ be the largest factor of $R$ with $\gcd(d, Q) = 1$. 
Clearly for $r \equiv 1 \bmod {d}$  the condition $\gcd(r,R) = 1$ is equivalent to 
$\gcd(r,Q) = 1$. Writing $r = du +1$, a simple inclusion--exclusion argument gives
$$
\sharp \cR_d  =  \sum_{e\mid Q} \mu(e) \sum_{\substack{0 \le u \le (L-1)/d\\ du +1 \equiv 0 \bmod e}} 1
 =   \sum_{e\mid Q} \mu(e) \(\frac{L-1}{de} + \vartheta_e\)
$$
with some $\vartheta_e \in [-1,1]$, $e \mid Q$. Hence, for  some $\vartheta \in [-1,1]$, we have 
\begin{align*}
\sharp \cR_d  &=  \sum_{e\mid Q} \mu(e) \sum_{\substack{0 \le u \le (L-1)/d\\ du +1 \equiv 0 \bmod e}} 1
 =   \sum_{e\mid Q} \mu(e) \(\frac{L-1}{de} + \vartheta_e\)\\
& = \frac{L-1}{d}    \sum_{e\mid Q} \frac{ \mu(e)}{e} +  \vartheta \tau(Q) 
=    \frac{L-1}{d}    \prod_{\substack{\ell \mid Q\\\ell~\text{prime}}} \(1 - \frac{1}{\ell}\)+   \vartheta \tau(Q)  \\
&  =    \frac{\varphi(Q)(L-1)}{dQ}+   \vartheta \tau(Q)  , 
    \end{align*} 
see~\cite[Theorem~62]{HaWr}.

Thus, using the elementary observations 
$$
 \frac{\varphi(Q)}{Q} \ge  \frac{\varphi(R)}{R}
\mand \tau(Q) \le \tau(R)
$$
and  the following well-known estimates
$$
\tau(R) \leq \exp\(\(\log 2 + o(1)\) \log R/\log \log R\)
 \quad \text{and}\quad    \varphi(R) \gg \frac{R}{\log \log R},
$$
as $R \to \infty$,
see~\cite[Theorems~317 and~328]{HaWr}, we see that 
$$
\sharp\,\cR_d  \ge  \frac{\varphi(R)(L-1)}{dR}- \tau(R) \gg \frac{L}{\log \log R}
$$ 
provided that 
 \begin{equation}\label{eq:Lchoose}
L \ge \exp\(0.7 \log p/\log \log p\). 
\end{equation}

The desired sum is then bounded by $|W|$, where
$$
W = \frac{1}{\sharp\,\cR_d} \sum_{r\in \cR_d} \sum_{n=1}^R \chi(\psi_{n r}(P))\, \er(anr).
$$ 
We interchange the sums and apply the Cauchy--Schwarz inequality, as exactly done in~\cite[Theorem~5.1]{ShSt}. Thanks to Lemma~\ref{lem:mult}, we have the following identity for any $r \in \cR_d$
$$\chi(\psi_{nr}(P))=\chi(\psi_n(P))\chi(\psi_{r}(nP)).$$
Applying the identity, we have
\begin{align*}
    W^2 &\leq   \frac{R}{\(\sharp\,\cR_d\)^2} \sum_{n=1}^R\left|\sum_{r \in \cR_d}\chi(\psi_{r}(nP))\, \er(anr)\right|^2 \\
    &\leq   \frac{R}{\(\sharp\,\cR_d\)^2} \sum_{r_1, r_2 \in \cR_d}\left|\ \sum_{n=1}^R \chi(\varPsi_{r_1,r_2}(nP)) \er(a(r_1 - r_2)n)\right|,
    \end{align*} 
where $\varPsi_{r_1,r_2}\in \F_p(E)$ is given by the following
$$\varPsi_{r_1,r_2}(Q)= \psi_{r_1}(Q)\cdot \psi_{r_2}^{-1}(Q).$$

For any $r_1\neq r_2$, it is clear that $\varPsi_{r_1,r_2}$ is not power of a function in $\overline{\F_p(E)}$
(since zeros of $\psi_{r}(Q)$ in the function field of $E$ are exactly distinct  torsion points of order dividing $r$, see also the proof of Lemma~\ref{lem:notpower}). 

Estimating the contributions from the integers $r_1=r_2\in \cR_d$  trivially, and applying Lemma~\ref{lem:key} for each such $r_1\neq r_2\in \cR_d$,  we have
\begin{align*}
 W&\ll R\(\sharp\,\cR_d\)^{-1/2}+ R^{1/2}Lp^{1/4}\\
&\ll   
R L^{-1/2}(\log \log R)^{1/2}+ R^{1/2}Lp^{1/4}.
\end{align*}
The result follows by choosing $L=  
R^{1/3}p^{-1/6}(\log \log p)^{1/3}$, as clearly it satisfies~\eqref{eq:Lchoose} due to the assumption on $R$.
\end{proof}

\section{Bounds of some double character sums with division polynomials} 

\subsection{Small $H$}
We denote $\ochi$ to be the complex conjugate  character of a multiplicative character $\chi$ of $\mathbb{F}^{*}_p$, that is, $\chi\ochi=1$. To prove Theorem~\ref{thm:intshift}, we first need the following estimate regarding the correlation between $\chi$ and $\ochi$.

Denote 
\[
T_{\chi, P} (H)   =   
\sum_{h=1}^H\left|\sum_{n=1}^R \chi(\psi_n(P))\ochi(\psi_{n+h}(P))\right|^2.
\]

\begin{lem}\label{lem:smallh}  
 Let $R$ be as in~\eqref{eqn:R}. For any integer $1\leq H\leq R$, we have
\[
T_{\chi, P} (H)  \ll  
H^{1/2}R^2  \exp\(O\((\log R)^{1/2}/\log \log R\)\)+ H^3R p^{1/2}.
\] 
\end{lem}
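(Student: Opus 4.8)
The plan is to estimate $T_{\chi,P}(H)$ by opening the square, reducing to complete character sums over the cyclic group generated by $P$, and then applying the completion technique together with Lemma~\ref{lem:key}. First I would expand
\[
T_{\chi, P}(H) = \sum_{h=1}^H \sum_{n_1, n_2 = 1}^R \chi(\psi_{n_1}(P))\ochi(\psi_{n_2}(P))\ochi(\psi_{n_1+h}(P))\chi(\psi_{n_2+h}(P)).
\]
The idea is to parametrise $n_2 = n_1 + s$ for $s$ running over a complete residue system modulo $R$, so that the inner object becomes, for each fixed $h$ and $s$, a complete sum $\sum_{n=1}^R \chi(\Psi_{h,s}(nP))$ where $\Psi_{h,s}$ is a ratio of four division polynomials evaluated at translates of $nP$ by $0$, $sP$, $hP$, $(s+h)P$. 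To put this in a form digestible by Lemma~\ref{lem:key}, I would use the multiplicative structure encoded in Lemma~\ref{lem:mult} and the fact that $\psi_{n+h}(P)$ can, via the translation formula, be related to division polynomials evaluated at $nP$ shifted by $hP$; this is the step where I invoke the same mechanism as in Lemma~\ref{lem:notpower}, producing a function $\Psi$ on $E$ of bounded degree that is generically not a perfect power.

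The second main step is to handle the diagonal-type degeneracies. For most triples $(h,s)$ the function $\Psi_{h,s}$ will be non-constant and not a nontrivial power, so Lemma~\ref{lem:key} gives a bound of the shape $\deg\Psi \cdot \sqrt{p} \ll \sqrt{p}$ (with an absolute implied constant absorbing the degree), contributing $H\cdot R\cdot \sqrt p$ to $T_{\chi,P}(H)$ after summing over $h$ and $s$. For the remaining degenerate $(h,s)$ — those where the translation amounts collapse, which forces something like $hP = 0$ or $sP=0$ or $(s\pm h)P=0$, hence $s$ or $h$ or $s\pm h$ divisible by $\ord P$ — the corresponding inner sum is trivially at most $R$, but since $1\le h\le H\le R$ and $H$ may be as large as $R$, one must count these carefully; the count of bad $s$ for fixed $h$ is $O(1)$ when $h$ is not itself degenerate, and the degenerate $h$ contribute an extra $R^2$ but only for $O(H/\ord P + 1)$ values of $h$. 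Tracking all of this gives a contribution of order $H^3 R p^{1/2}$ from the "generic" part once one re-examines the bookkeeping: the point is that after Cauchy--Schwarz one is really estimating a fourth moment, so the number of relevant parameters is effectively $H^2$ for the off-diagonal and $H$ for the diagonal, explaining the $H^3$ and the $H^{1/2}R^2$ shapes in the claimed bound.

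Actually, I expect the cleanest route to the stated bound is a two-stage argument rather than a single expansion: first bound $T_{\chi,P}(H)$ in terms of shorter incomplete sums via completion (introducing $\er(an)$ factors and summing over $a \bmod R$, which is where Lemma~\ref{lem:key} with its character $\er(an)$ is tailor-made to be used), and then apply Cauchy--Schwarz in the $h$-variable to trade the absolute values for a smooth average, the step responsible for the $H^{1/2}$ savings and the appearance of $\exp(O((\log R)^{1/2}/\log\log R))$ — the latter entering exactly as in Lemma~\ref{lem:completesumd}, through the divisor-function and Euler-totient estimates needed to build the auxiliary multiplier set $\cR_d$. Combining the complete-sum input $p^{1/12}R^{5/6}(\log\log R)^{1/3}$ from Lemma~\ref{lem:completesumd} for the "large $R$" terms with the bare $\sqrt p$ from Lemma~\ref{lem:key} for the shift-dependent terms, and optimising the length parameter, should yield precisely the two terms $H^{1/2}R^2\exp(O((\log R)^{1/2}/\log\log R))$ and $H^3 R p^{1/2}$.

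The hard part will be the degree bound and non-power verification for the four-fold ratio $\Psi_{h,s}$ uniformly in the shift parameters: Lemma~\ref{lem:notpower} handles a two-translate ratio, but here one has translates by $0$, $sP$, $hP$, $(s+h)P$ simultaneously, and one must rule out cancellation of zeros and the function being a perfect power in all but $O(1)$ exceptional cases for each $h$. I expect this to require a careful analysis of when the multisets of torsion zeros of the numerator and denominator polynomials, shifted by these four points, can coincide or pair up — essentially an elaboration of the final paragraph of the proof of Lemma~\ref{lem:notpower}, using $\gcd(k\ell,R)=1$ and $\ord P \nmid m$ to force the shifts to be non-degenerate. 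Once that combinatorial geometry of torsion points is under control, the rest is completion, Cauchy--Schwarz, and the elementary number-theoretic estimates already assembled in Section~\ref{sec:elliptic}.
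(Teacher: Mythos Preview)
Your proposal has a genuine gap at its core. You repeatedly assume one can write $\chi(\psi_{n+h}(P))$ as $\chi(\Psi_h(nP))$ for some function $\Psi_h\in\F_p(E)$ of bounded degree, so that the inner sum becomes a character sum over the cyclic group $\{nP\}$ to which Lemma~\ref{lem:key} applies. No such expression exists: the value $\psi_n(P)$ is \emph{not} determined by the point $nP\in E(\F_p)$ alone (recall the period of $\psi_n(P)$ can be as large as $(p-1)\ord P$, while $nP$ has period $\ord P$), so there is no ``translation formula'' of the kind you invoke. Lemma~\ref{lem:mult} is a \emph{multiplicative} identity $\chi(\psi_{mn}(P))=\chi(\psi_m(nP))\chi(\psi_n(P))^{m^2}$; it says nothing about $\psi_{n+h}$. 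Consequently your ``$\deg\Psi\cdot\sqrt p\ll\sqrt p$'' step has no basis, and the four-translate function $\Psi_{h,s}$ you describe is not well-defined as an element of $\F_p(E)$.

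The paper's proof supplies exactly the missing idea. One writes each $h\in\{1,\ldots,H\}$ as $h=ek$ with $e$ built from the prime divisors of $R$ and $\gcd(k,R)=1$, then performs the change of variable $n\mapsto kn$ in the complete sum over $n$; now the shift is $kn\mapsto k(n+e)$ and Lemma~\ref{lem:mult} converts $\chi(\psi_{kn}(P))\ochi(\psi_{k(n+e)}(P))$ into a product of $\chi(\psi_k(nP)\psi_k^{-1}((n+e)P))$ and a factor depending only on $k\bmod d$. After Cauchy--Schwarz in $k$ one lands on the function $\Psi_{k,\ell}$ of Lemma~\ref{lem:notpower}, whose degree is $\ll (H/e)^2$ --- this, not $O(1)$, is the source of the $H^3$ in the second term. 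The factor $\exp\bigl(O((\log R)^{1/2}/\log\log R)\bigr)$ does \emph{not} come from the multiplier set $\cR_d$ of Lemma~\ref{lem:completesumd} (that lemma is not used here at all); it arises from bounding $\sum_{e\in\cE} e^{-1/2}$ by the Euler product over primes dividing $R$.
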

    
\begin{proof} Let  $\cE$ be the set of positive integers $e\le H$ which factors into primes which are also prime divisors of $R$.
For each $e\in \cE$ we  consider the following set of integers
$$
\cK_e = \{1\le k \le H/e:~\gcd(k,R) = 1\}.
$$
Clearly we have the following  partition
$$
\{1, \ldots, H\} = \bigsqcup_{e \in \cE} \(e\cK_e\) 
$$
in disjointed sets $e\cK_e=  \{ek:~k \in \cK_e\}$. Hence
\begin{equation}\label{eq:T Se}
T_{\chi, P} (H,N)  \le  \sum_{e \in \cE} S_e,
\end{equation}
where 
$$
S_e =\sum_{k\in \cK_e}\left|\sum_{n=1}^R \chi(\psi_n(P))\ochi(\psi_{n+ek}(P))\right|^2.
$$
Clearly, we can replace $n$ in the inner sums with $kn$ for any $k \in \cK_e$, hence
$$
S_e =\sum_{k\in \cK_e}\left|\sum_{n=1}^R \chi(\psi_{kn}(P))\ochi(\psi_{k(n+e)}(P))\right|^2.
$$

For any $k\in \cK_e$,  by Lemma~\ref{lem:mult} we have the following identity
\begin{align*}
&\chi(\psi_{kn}(P))\ochi(\psi_{k(n+e)}(P))\\
&\qquad=\chi\(\psi_{k}\(nP\) \psi_{k}^{-1}\((n+e)P\)\)\chi\(\psi_n\(P\) \psi^{-1}_{n+e}\(P\)\)^{2k^2},
\end{align*}
with the convention that if $ \psi_{k}\((n+e)P\) = 0$ or $\psi_{n+e}\(P\)=0$ then the whole expression 
is equal to zero. We follow a similar convention below as well. 

Therefore, 
\begin{equation}\label{eq:SeWe}
S_e\le   \sum_{j=1}^dW_{j,e}, 
\end{equation}
with 
\begin{align*}
W_{j,e}  & =  \sum_{\substack{k\in \cK_e\\ k\equiv j \bmod  d}}
\biggl|\sum_{n=1}^R
\chi\(\psi_{k}\(nP\) \psi_{k}^{-1}\((n+e)P\)\) \\
& \qquad \qquad  \qquad \qquad  \qquad \qquad \chi\(\psi_n(P)  \psi^{-1}_{n+e}\(P\)\)^{2k^2}
\biggr|^2\\
&= \sum_{m,n =1}^R
\chi\(\psi_m(P) \cdot \psi^{-1}_{m+e}(P) \cdot
\psi^{-1}_{n}(P) \cdot \psi_{n+e}(P)\)^{2j^2}\\
&  \qquad \qquad  \qquad \qquad  \qquad \qquad  \qquad \qquad 
\sum_{\substack{k\in \cK_e\\ k\equiv j \bmod  d}} \varXi_{k,m,n},
\end{align*} 
where we denote
\[
\varXi_{k,m,n}=\chi\(\psi_{k}\(mP\) \psi_{k}^{-1}\((m+e)P\)\)  \ochi\(\psi_{k}\(nP\) \psi_{k}^{-1}\((n+e)P\)\) .
\]

By the Cauchy--Schwarz inequality, and changing the order of summation, we see that 
\begin{align*}
W_{j,e}^2 & \leq  R^2 \sum_{m,n =1}^R
      \left|   \sum_{\substack{k\in \cK_e\\ k\equiv j \bmod  d}}  \varXi_{k,m,n} \right|^2\\
      & = R^2\sum_{\substack{k,\ell \in \cK_e\\ k\equiv\ell\equiv j \bmod  d}} 
    \sum_{m,n =1}^R \varXi_{k,m,n}\overline{ \varXi_{\ell,m,n}}. 
\end{align*}
Denoting
\[\varPsi_{k,\ell}(Q)=\psi_{k}(Q)\cdot \psi_{k}^{-1}(Q+eP)\cdot \psi_{\ell}^{-1}(Q)\cdot \psi_{\ell}(Q+eP),
\]
we see that 
\begin{align*}
W_{j,e}^2&\leq R^2  \sum_{\substack{k,\ell \in \cK_e\\ k\equiv\ell\equiv j \bmod  d}}  
      \sum_{m,n =1}^R
      \chi\(\varPsi_{k,\ell}(nP)\)   \overline{\chi\(\varPsi_{k,\ell}(mP)\)}\\
    &  =   R^2\sum_{\substack{k,\ell \in \cK_e\\ k\equiv\ell\equiv j \bmod  d}}    \left| \sum_{n=1}^R
      \chi(\varPsi_{k,\ell}(nP))\right|^2. 
\end{align*}

By Lemma~\ref{lem:notpower}  we have 
\[
\deg  \varPsi_{k,\ell} \ll k^2+\ell^2\ll (H/e)^2.
\]

In particular, using Lemma~\ref{lem:key} for any distinct integers $k, \ell \in \cK_e$ and 
 estimating the contribution from $k= \ell$  trivially, since the implied constants 
 may depend on $d$, we obtain 
\[
W_{j,e}^2 \ll R^2\( \sharp\, \cK_e R^2 + \( \sharp\, \cK_e\) ^2  \(\(He^{-1}\)^2 p^{1/2} \)^2 \).
 \] 
 
 Since $\sharp\, \cK_e \le H/e$, we see that 
\[
W_{j,e} \ll   H^{1/2}R^2 e^{-1/2} +  \ H^3Rp^{1/2}e^{-3}. 
\]
Substituting this bound in~\eqref{eq:SeWe}, we derive 
\begin{align*}
S_e & \ll  H^{1/2}R^2 e^{-1/2} +   H^3Rp^{1/2}e^{-3}.
\end{align*}

Recalling~\eqref{eq:T Se}, we see that 
\begin{equation}\label{eq:T rho}
\begin{split}
 T_{\chi, P} (H)  & \ll   \sum_{e \in \cE}  \(H^{1/2}R^2 e^{-1/2} +H^3R p^{1/2} e^{-3}\)\\
 & \ll H^{1/2}R^2 \sum_{e \in \cE} e^{-1/2} + H^3R  p^{1/2} \sum_{e \in \cE}  e^{-3}\\
  & \ll H^{1/2}R^2\rho + H^3R p^{1/2}, 
\end{split}
\end{equation}
where
 \[
 \rho = \sum_{e \in \cE} e^{-1/2}.
 \]
 Let $\nu = \omega(R)$ and let $p_i$ denote the $i$th prime. Then 
\begin{equation}\label{eq:rho}
\begin{split}
 \rho & \le \prod_{p \mid R}  \sum_{j=1}^\infty p^{-j/2} \le 
 \prod_{i=1}^\nu  \sum_{j=1}^\infty p_i^{-j/2} \\
 & = \prod_{i=1}^\nu  \(1 + p_i^{-1/2} + O\(p_i^{-1}\)\) = \exp \lambda , 
\end{split}
\end{equation}
  where 
\begin{equation}\label{eq:lambda}
\begin{split}
  \lambda & = \sum_{i=1}^\nu  \log \(1 + p_i^{-1/2} + O(p_i^{-1})\)
  \ll \sum_{i=1}^\nu  p_i^{-1/2} \\
  & \ll \sum_{i=1}^\nu \(i \log i\)^{-1/2} \ll \nu^{1/2}  \(\log \nu\)^{-1/2}. 
\end{split}
\end{equation}
Since obviously $\nu! \le R$, from the Stirling formula, we see that
\[
\nu \ll \frac{\log R}{\log \log R}
\]
and thus  $\rho \le \exp\(O\((\log R)^{1/2}/\log \log R\)\)$. Substituting this bound on $\rho$ in~\eqref{eq:T rho}, we conclude the proof.  
\end{proof}

\begin{rem}\label{rem:withgamma} 
Using the so-called Rankin method, that is, using that for any $\gamma> 0$, similarly to~\eqref{eq:rho}
and~\eqref{eq:lambda}, we have  
\begin{align*}
 \rho = \sum_{e \in \cE} e^{-1/2} &\le   \sum_{\substack{e \in \N\\ p \mid e\, \Rightarrow \, p\mid R}} e^{-1/2}\(\frac{H}{e}\)^\gamma \ll H^\gamma \sum_{i=1}^\nu \(i \log i\)^{-1/2-\gamma} \\
  &= H^\gamma \nu^{1/2-\gamma}  \(\log \nu\)^{-1/2-\gamma}\\
   &= H^\gamma \exp\(O\((\log R)^{1/2-\gamma} /\log \log R\)\), 
\end{align*}
and optimising the choice of $\gamma$ one can obtain a slightly more precise bound when $H$ is 
small.
\end{rem}

 \subsection{Large $H$}\label{sec:largeh}
In this section, we obtain a similar estimate for the correlation between $\chi$ and $\ochi$ as in Lemma~\ref{lem:smallh}, for a large $H$.

We first estimate $T_{\chi,P}(H)$ for $H=R$. 

\begin{lem}\label{lem:largeh}
Assume that $R \ge p^{1/2}\exp\(2.1 \log p/\log \log p\)$. Then, we have
\[
T_{\chi,P}(R)\ll R^{8/3}p^{1/6}(\log \log R)^{2/3}. 
 \]
\end{lem}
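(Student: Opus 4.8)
The plan is to bound the complete sum $T_{\chi,P}(R)=\sum_{h=1}^R\bigl|\sum_{n=1}^R\chi(\psi_n(P))\ochi(\psi_{n+h}(P))\bigr|^2$ by opening the square, reducing everything to complete sums of the shape treated in Lemma~\ref{lem:key}, and then exploiting the extra savings available from Lemma~\ref{lem:completesumd} since we are now in the range $R\ge p^{1/2}\exp(2.1\log p/\log\log p)$. Concretely, I would first mimic the opening moves of the proof of Lemma~\ref{lem:smallh}: write $\{1,\dots,R\}=\bigsqcup_{e\in\cE}(e\cK_e)$ with $\cE$ the set of $R$-smooth integers up to $R$ and $\cK_e=\{1\le k\le R/e:\gcd(k,R)=1\}$, so that $T_{\chi,P}(R)\le\sum_{e\in\cE}S_e$ with $S_e=\sum_{k\in\cK_e}\bigl|\sum_{n=1}^R\chi(\psi_{kn}(P))\ochi(\psi_{k(n+e)}(P))\bigr|^2$. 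Using Lemma~\ref{lem:mult} to split $\chi(\psi_{kn}(P))=\chi(\psi_k(nP))\chi(\psi_n(P))^{k^2}$ and likewise for the shifted term, the inner sum factors exactly as in Lemma~\ref{lem:smallh}, and after Cauchy--Schwarz in the $k$-variable and collapsing the $m,n$ sums one is left with sums of $\chi(\varPsi_{k,\ell}(nP))$ over the full period, where $\varPsi_{k,\ell}(Q)=\psi_k(Q)\psi_k^{-1}(Q+eP)\psi_\ell^{-1}(Q)\psi_\ell(Q+eP)$.

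The key departure from Lemma~\ref{lem:smallh} is how one estimates these complete sums. For the diagonal-type terms and for the genuinely ``small-degree'' instances one still invokes Lemma~\ref{lem:key}, which gives $\ll(\deg\varPsi_{k,\ell})\sqrt p\ll(R/e)^2\sqrt p$; but when $k=\ell$ the function $\varPsi_{k,\ell}$ degenerates to a translate-difference $\psi_k(Q)\psi_k^{-1}(Q+eP)$ and the relevant sum $\sum_{n=1}^R\chi(\psi_k((n)P))\ochi(\psi_k((n+e)P))$ — equivalently the $k=\ell$ contribution to $W_{j,e}$ — can be controlled via the sharper Lemma~\ref{lem:completesumd}, which under the hypothesis on $R$ beats the trivial bound $R$ by a factor of roughly $(R/p)^{1/6}$. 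I would therefore re-run the bound for $W_{j,e}$ keeping track of the off-diagonal terms (handled by Lemma~\ref{lem:key}) and the diagonal $k=\ell$ terms (handled, after reduction, by Lemma~\ref{lem:completesumd} rather than trivially), obtaining something of the shape $W_{j,e}^2\ll R^2\bigl(\sharp\cK_e\cdot p^{1/6}R^{5/3}(\log\log R)^{2/3}+(\sharp\cK_e)^2(R/e)^4 p\bigr)$, whence $W_{j,e}\ll (R/e)^{1/2}R\,p^{1/12}R^{5/6}(\log\log R)^{1/3}+(R/e)(R/e)^2 R^{1/2}p^{1/2}$, and similarly $S_e\ll \sum_{j\le d}W_{j,e}$.

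Summing over $e\in\cE$ then contributes a convergent smooth-number sum $\sum_{e\in\cE}e^{-1/2}$ (and $\sum e^{-3}$) which, exactly as in~\eqref{eq:rho}--\eqref{eq:lambda}, is $\exp(O((\log R)^{1/2}/\log\log R))=R^{o(1)}$, a factor absorbed into the stated error. Balancing the two resulting terms — one of size $\approx R^{1/2}p^{1/12}R^{5/6}(\log\log R)^{1/3}\cdot R^{1/2}$ after accounting for the $H=R$ normalisation, i.e. $R^{8/3}p^{1/6}(\log\log R)^{2/3}$, against the Lemma~\ref{lem:key}-driven term — should yield precisely $T_{\chi,P}(R)\ll R^{8/3}p^{1/6}(\log\log R)^{2/3}$, the second term being dominated in the assumed range. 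The main obstacle I anticipate is bookkeeping rather than conceptual: one must be careful that the $k=\ell$ reduction really produces a sum to which Lemma~\ref{lem:completesumd} (a statement about $\sum\chi(\psi_n(P))\er(an)$) applies — this requires using Lemma~\ref{lem:mult} once more to rewrite $\chi(\psi_k((n+e)P))$ in terms of $\chi(\psi_{k(n+e)}(P))$ and $\chi(\psi_{n+e}(P))$, introducing an additive character in $n$ of modulus dividing $R$, so that the cancellation is genuinely of the form covered by Lemma~\ref{lem:completesumd} — and that the smooth-number sum over $e$ does not interact badly with the $e$-dependence hidden inside $\deg\varPsi_{k,\ell}\ll(R/e)^2$.
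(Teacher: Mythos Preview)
Your approach has two genuine gaps, one conceptual and one quantitative, and in fact the paper proceeds along an entirely different (and much shorter) route.

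\textbf{The diagonal $k=\ell$ cannot be improved.}  You write that for $k=\ell$ the function $\varPsi_{k,\ell}$ ``degenerates to a translate-difference $\psi_k(Q)\psi_k^{-1}(Q+eP)$'', and propose to bound the resulting sum via Lemma~\ref{lem:completesumd}.  But this is not what happens: with the definition $\varPsi_{k,\ell}(Q)=\psi_k(Q)\psi_k^{-1}(Q+eP)\psi_\ell^{-1}(Q)\psi_\ell(Q+eP)$ you correctly record, setting $k=\ell$ gives $\varPsi_{k,k}\equiv 1$ identically.  The inner sum is then exactly $R$ (up to the handful of $n$ with $\psi_k(nP)=0$), and there is nothing for Lemma~\ref{lem:completesumd} to act on.  The diagonal contribution $\sharp\,\cK_e\cdot R^2$ inside $W_{j,e}^2$ is therefore an equality, not a trivial estimate waiting to be sharpened.

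\textbf{The off-diagonal Weil bound is worse than trivial for $H=R$.}  Even granting your diagonal claim, the second term in your own display,
\[
W_{j,e}^2 \ll R^2\bigl(\sharp\,\cK_e\cdot p^{1/6}R^{5/3}(\log\log R)^{2/3}+(\sharp\,\cK_e)^2(R/e)^4p\bigr),
\]
is fatal: with $\sharp\,\cK_e\le R/e$ it gives $W_{j,e}\ll (R/e)^3Rp^{1/2}$, and already the single term $e=1$ contributes $R^4p^{1/2}$ to $T_{\chi,P}(R)$.  This is vastly larger than the target $R^{8/3}p^{1/6}$, so your assertion that ``the second term [is] dominated in the assumed range'' is false.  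The underlying reason is that $\deg\varPsi_{k,\ell}\ll(R/e)^2$ can be as large as $R^2$ when $H=R$, so the Weil-type bound of Lemma~\ref{lem:key} says nothing.

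\textbf{What the paper actually does.}  The paper abandons the $\cK_e$-decomposition entirely and uses a pure Fourier argument.  Expanding the square and using periodicity plus orthogonality of additive characters modulo $R$ gives
\[
T_{\chi,P}(R)=\frac{1}{R}\sum_{\lambda=1}^R\Bigl|\sum_{n=1}^R\chi(\psi_n(P))\er(\lambda n)\Bigr|^2\Bigl|\sum_{n=1}^R\chi(\psi_n(P))\er(-\lambda n)\Bigr|^2.
\]
One then bounds the second factor uniformly in $\lambda$ by Lemma~\ref{lem:completesumd}, obtaining $\ll\bigl(R^{5/6}p^{1/12}(\log\log R)^{1/3}\bigr)^2$, and evaluates the remaining average $\frac{1}{R}\sum_\lambda|\cdot|^2$ by Parseval as $\le R$.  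The product is exactly $R^{8/3}p^{1/6}(\log\log R)^{2/3}$.  No smooth-number sums, no Cauchy--Schwarz in an auxiliary variable, and Lemma~\ref{lem:key} is not used at all here --- it has already done its work inside the proof of Lemma~\ref{lem:completesumd}.
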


\begin{proof}   
Using that $\chi(\psi_k(P))$, $k =1,2, \ldots$,  is  periodic with period $R$, we can derive
\begin{align*}
&T_{\chi,P}(R) \\
& \quad =\sum_{h=1}^R\left|\sum_{n=1}^R \chi(\psi_n(P))\ochi(\psi_{n + h}(P)) \right|^2\\
&\quad=\sum_{m,n=1}^R \chi(\psi_m(P)) \ochi(\psi_{n}(P)) \sum_{h=1}^R\chi(\psi_{m +h}(P)) \ochi(\psi_{n + h}(P))\\
&\quad = 
\sum_{\substack{m_1,m_2,n_1,n_2=1\\m_1-m_2 \equiv n_1 - n_2 \bmod  R}}^R 
\chi(\psi_{m_1}(P)) \ochi(\psi_{n_1}(P)
\chi(\psi_{m_2}(P)) \ochi(\psi_{n_2}(P)).
\end{align*}

Using the orthogonality of exponential functions, we write
\begin{align*}
&T_{\chi,P}(R) \\
&\quad =\sum_{m_1,m_2,n_1,n_2=1}^R 
\chi(\psi_{m_1}(P)) \ochi(\psi_{n_1}(P)
\chi(\psi_{m_2}(P)) \ochi(\psi_{n_2}(P))\\
&\qquad\qquad\qquad\qquad\qquad\qquad\qquad\cdot \frac{1}{R} \sum_{\lambda=1}^R \er\(\lambda\(m_1-m_2- n_1 +n_2\)\)\\
&\quad = \frac{1}{R} \sum_{\lambda=1}^R \left| \sum_{n=1}^R \chi (\psi_n(P)) \er(\lambda n)\right|^2  \left| \sum_{n=1}^R \chi (\psi_n(P)) \er(-\lambda n)\right|^2. 
\end{align*}
Applying  Lemma~\ref{lem:completesumd}   and the orthogonality of exponential functions to the 
remaining sum
we see that 
\begin{align*}
\sum_{h=1}^R&\left|\sum_{n=1}^R \chi(\psi_n(P))\ochi(\psi_{n + h}(P)) \right|^2\\
 & \ll \(R^{5/6}p^{1/12}(\log \log R)^{1/3}\)^2 \frac{1}{R} \sum_{\lambda=1}^R \left| \sum_{n=1}^R \chi (\psi_n(P)) \er(\lambda n)\right|^2\\
 & \ll  \(R^{5/6}p^{1/12}(\log \log R)^{1/3}\)^2 R  = R^{8/3}p^{1/6}(\log \log R)^{2/3},
   \end{align*}
which concludes the proof.
\end{proof}

\section{Proof of Theorem~\ref{thm:intshift}} 
\subsection{Preliminaries} 
We start with the following bound on weighted sums. 

\begin{lem}\label{lem:twistby1bdd}
Let $R$ be as in~\eqref{eqn:R}, and let $\alpha_n$, $n=1, \ldots, R$, be a sequence of weights with $|\alpha_n| \le 1$.  Then, we have
\[
\frac{1}{H}\sum_{h=1}^H \left|  \sum_{n=1}^R \alpha_n\chi(\psi_{n+h}(P))\right|
\ll  H^{-1/4}\left(R^{1/2}\left|T_{\chi,P}(H)\right|^{1/4}+ R\right).
\]
\end{lem}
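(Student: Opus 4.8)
The plan is to bound the left-hand side by a double sum over $h$ and then apply the Cauchy--Schwarz inequality twice, together with the hybrid bound for $T_{\chi,P}(H)$. First I would write, using $|\alpha_n|\le 1$,
\[
\frac{1}{H}\sum_{h=1}^H \left|  \sum_{n=1}^R \alpha_n\chi(\psi_{n+h}(P))\right|
\le \frac{1}{H}\sum_{n=1}^R \left|  \sum_{h=1}^H \chi(\psi_{n+h}(P))\beta_{n,h}\right|
\]
for suitable unimodular phases $\beta_{n,h}$, but it is cleaner to keep the sum as it is and apply Cauchy--Schwarz in $h$ directly:
\[
\frac{1}{H}\sum_{h=1}^H \left|  \sum_{n=1}^R \alpha_n\chi(\psi_{n+h}(P))\right|
\le \frac{1}{H}\left(H \sum_{h=1}^H \left|  \sum_{n=1}^R \alpha_n\chi(\psi_{n+h}(P))\right|^2\right)^{1/2}.
\]
So it suffices to estimate $\Sigma := \sum_{h=1}^H \left|  \sum_{n=1}^R \alpha_n\chi(\psi_{n+h}(P))\right|^2$, and the target bound becomes $\Sigma \ll H^{1/2}\left(R^{1/2}|T_{\chi,P}(H)|^{1/4}+R\right)^2 = H^{1/2}\left(R|T_{\chi,P}(H)|^{1/2}+R^2\right)$ up to constants (after squaring the claimed inequality and multiplying by $H^2$). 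Wait — one must be careful: squaring the claimed bound gives $H^{-1/2}(R^{1/2}|T|^{1/4}+R)^2$, so the requirement is $\Sigma \ll H^{3/2}\cdot H^{-1/2}(R^{1/2}|T|^{1/4}+R)^2 = H(R|T|^{1/2}+R^2)/H^{?}$; I would sort the exact exponents during the write-up, the structure is what matters.

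The core estimate for $\Sigma$ proceeds by expanding the square and swapping the order of summation:
\[
\Sigma = \sum_{m,n=1}^R \alpha_m\overline{\alpha_n}\sum_{h=1}^H \chi(\psi_{m+h}(P))\overline{\chi(\psi_{n+h}(P))}.
\]
The diagonal-type terms where $m\equiv n \bmod R$ contribute $O(RH)$ (there are $O(R)$ such pairs, each inner sum over $h$ is $O(H)$). For the off-diagonal contribution, shifting $h\mapsto h$ and writing $k = m-n$, the inner sum is a correlation $\sum_{h}\chi(\psi_{n+h}(P))\overline{\chi(\psi_{n+h+k}(P))}$; by positivity I would bound $|\Sigma|$ by $R\sum_{k}\max_n|\sum_h \cdots|$ or, more efficiently, apply Cauchy--Schwarz a second time in the variable $n$ to convert the sum of correlations into $T_{\chi,P}(H)$ after extending the range of $h$ from $[1,H]$ to a full period (completing the sum, using periodicity of $\chi(\psi_\bullet(P))$ with period dividing $R$, and nonnegativity). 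This is the standard "complete the sum and use the fourth moment" maneuver: one arrives at something like $\Sigma \le \left(R \sum_{m,n}\left|\sum_{h=1}^R \chi(\psi_{m+h}(P))\overline{\chi(\psi_{n+h}(P))}\right|\right)^{1/2}\cdot(\text{counting factor})$, and the inner double sum over a full period is exactly $R\cdot T_{\chi,P}(H)$-type after recognizing $\sum_{m,n}|\sum_{h=1}^R\cdots|$ relates to $\sum_{h=1}^R|\sum_n \chi(\psi_n(P))\overline{\chi(\psi_{n+h}(P))}|^2$ via another Cauchy--Schwarz/orthogonality step.

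The step I expect to be the main obstacle is organizing the second application of Cauchy--Schwarz so that the off-diagonal terms assemble precisely into $T_{\chi,P}(H)$ rather than some larger quantity, and in particular controlling the extension of the $h$-range from $[1,H]$ to a complete period $[1,R]$ without losing more than a constant factor — this works because the summand, after the relevant rearrangement, is nonnegative, so dropping the restriction $h\le H$ only increases it, but one has to set up the inequality in a form where this monotonicity is visible (e.g. having already passed to $|\cdot|^2$ or $|\cdot|^4$ of a sum whose terms are being added positively). Once that is in place, substituting the definition of $T_{\chi,P}(H)$ and collecting the diagonal term $O(RH)$ gives $\Sigma \ll R^{1/2}H^{1/2}|T_{\chi,P}(H)|^{1/2} + RH$ (roughly), and feeding this back through the first Cauchy--Schwarz and dividing by $H$ yields the claimed bound $H^{-1/4}(R^{1/2}|T_{\chi,P}(H)|^{1/4}+R)$ after taking square roots. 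I would double-check the bookkeeping of exponents of $H$ and $R$ at the very end against the two terms on the right-hand side.
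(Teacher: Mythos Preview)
Your overall plan --- two applications of Cauchy--Schwarz, then recognition of $T_{\chi,P}(H)$ --- is the paper's plan, but you have misidentified where periodicity is used, and this would cost you the stated lemma.

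You say you will ``extend the range of $h$ from $[1,H]$ to a full period $[1,R]$'' by nonnegativity, and you in fact write $\sum_{h=1}^{R}$ in your key display. If you actually carry this out, the quantity you obtain is $T_{\chi,P}(R)$, not $T_{\chi,P}(H)$, and the lemma as stated is lost. The $h$-sum is \emph{not} the one that gets completed. After expanding the square $\bigl|\sum_{n}\alpha_n\chi(\psi_{n+h}(P))\bigr|^2$, it is the variables $m,n$ that already range over a full period $[1,R]$, and this is what allows the crucial shift. Concretely, after the second Cauchy--Schwarz you have
\[
\Sigma^{2} \le R^{2}\sum_{m,n=1}^{R}\ \sum_{h,k=1}^{H}
\chi(\psi_{m+h}(P))\ochi(\psi_{n+h}(P))\ochi(\psi_{m+k}(P))\chi(\psi_{n+k}(P)),
\]
and now, because the $m$- and $n$-sums are over a full period, one shifts $m\mapsto m-k$, $n\mapsto n-k$. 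Only the difference $j=h-k$ survives, with $|j|<H$ and multiplicity at most $H$, and the inner sum factors as
\[
\Bigl|\sum_{n=1}^{R}\chi(\psi_{n}(P))\ochi(\psi_{n+j}(P))\Bigr|^{2}.
\]
Summing over $|j|<H$ gives $2T_{\chi,P}(H)+O(R^{2})$ (the $j=0$ term). No extension of $h$ is needed or wanted.

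Your proposed intermediate estimate ``$\Sigma \ll R^{1/2}H^{1/2}|T_{\chi,P}(H)|^{1/2}+RH$'' is also off; the computation above yields $\Sigma^{2}\ll R^{2}H\bigl(T_{\chi,P}(H)+R^{2}\bigr)$, and feeding this back through the first Cauchy--Schwarz and dividing by $H$ produces exactly $H^{-1/4}\bigl(R^{1/2}|T_{\chi,P}(H)|^{1/4}+R\bigr)$.
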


\begin{proof}
Denote 
$$S=\sum_{h=1}^H \left|\sum_{n=1}^R \alpha_n\chi(\psi_{n + h}(P)) \right|.$$
By the Cauchy--Schwarz inequality,
\begin{align*}
|S|^2&\leq H \sum_{h=1}^H \left|\sum_{n=1}^R \alpha_n\chi(\psi_{n + h}(P))\right|^2\\
&\leq H \sum_{m,n=1}^R \overline{\alpha_m} \alpha_n \sum_{h=1}^H\chi(\psi_{m +h}(P)) \ochi(\psi_{n + h}(P)).
\end{align*}
In particular, we have
$$|S|^2\leq H \sum_{m,n=1}^R \left|\sum_{h=1}^H\chi(\psi_{m  + h}(P)) \ochi(\psi_{n+ h}(P))\right|.$$
Applying the Cauchy--Schwarz inequality again, we get
\begin{align*}
 |S|^4&\leq H^2R^2 \sum_{m,n=1}^R\,\left|\sum_{h=1}^H \chi(\psi_{m+ h}(P)) \ochi(\psi_{n+ h}(P))\right|^2\\
 &= H^2R^2 \sum_{m,n=1}^R\,\sum_{h,k=1}^H \chi(\psi_{m + h}(P)) \ochi(\psi_{n + h}(P))\\ 
&\qquad\qquad\qquad\qquad\qquad\qquad \cdot \ochi(\psi_{m + k}(P)) \chi(\psi_{n+ k}(P)).
\end{align*}

By periodicity,  we now have 
\begin{align*}
 |S|^4&\leq H^3R^2     \sum_{|h| \leq H}  \left|
 \sum_{m,n=1}^R  \chi(\psi_{m}(P)) \ochi(\psi_{n}(P))  \ochi(\psi_{m + h}(P)) \chi(\psi_{n+ h}(P)) \right|\\
 &= H^3R^2     \sum_{|h| \leq H}  \left|
\sum_{n=1}^R  \chi(\psi_{n}(P))  ) \ochi(\psi_{n+ h}(P)) \right|^2\\
 & = 2 H^3R^2   \sum_{h=1}^H  \left|
 \sum_{n=1}^R \chi(\psi_{n}(P))  ) \ochi(\psi_{n+ h}(P)) \right|^2 + H^3R^4\\
 &=  2 H^3R^2 T_{\chi,P}(H)+ H^3R^4, 
\end{align*} 
and the result follows. 
\end{proof}


Let $a$ be an arbitrary integer. Applying Lemma~\ref{lem:twistby1bdd}, we have
\begin{align*}
    &\frac{1}{H^{m-1}} \sum_{h_2, \ldots, h_m=1}^H    \left| \sum_{n=1}^R \chi(\psi_n(P)) 
    \prod_{j=2}^m \chi(\psi_{n+h_j}(P))\, \er(an) \right| \\
    &\qquad\qquad\qquad\qquad\qquad\qquad\ll  H^{-1/4}\left(R^{1/2}\left|T_{\chi,P}(H)\right|^{1/4}+ R\right).
\end{align*}

On the other hand, by the periodicity and Lemma~\ref{lem:twistby1bdd}, we have
\begin{align*}
&\frac{1}{H^{m}} \sum_{h_1, \ldots, h_m=1}^H
    \left| \sum_{n=1}^R 
    \chi(\psi_{n+h_1}(P))\prod_{j=2}^m \chi(\psi_{n+h_j}(P)) \er(an)\right| \\
    &\qquad\qquad =\frac{1}{H^m} \sum_{h_1, \ldots, h_m=1}^H\left| \sum_{n=1}^R 
    \chi(\psi_{n}(P))\prod_{j=2}^m \chi(\psi_{n+h_j-h_1}(P)) \er(an)\right|\\
    &\qquad\qquad\qquad\qquad\qquad\ll  H^{-1/4}\left(R^{1/2}\left|T_{\chi,P}(H)\right|^{1/4}+ R\right).
\end{align*}

By the completing technique, see~\cite[Lemma~12.1]{IwKow}, we have
$$
U_{m, \chi, P} (H,N), V_{m, \chi, P} (H,N) 
\ll  H^{-1/4}\left(R^{1/2}\left|T_{\chi,P}(H)\right|^{1/4}+ R\right)\log R.
$$

 \subsection{Concluding the proof}
Now,  applying  Lemma~\ref{lem:smallh}, we have
\begin{align*}
U_{m,\chi,P}(H,N)&, V_{m,\chi,P}(H,N)\\
& \ll  H^{-1/8}R\exp\(O\((\log R)^{1/2}/\log \log R\)\)\\
&\qquad \qquad \quad+H^{1/2}R^{3/4}p^{1/8} \log R+H^{-1/4}R \log R. 
\end{align*}
Clearly the first term always dominates the third, which can be omitted and thus we obtain~\eqref{eq:B1}.

Next, assume that $R\ge p^{1/2}\exp\(2.1\log p/\log \log p\)$. Hence we can apply Lemma~\ref{lem:largeh}, and derive 
\begin{align*}
& U_{m,\chi,P}(H,N), V_{m,\chi,P}(H,N)\\
&\qquad \qquad  \ll H^{-1/4} R^{7/6}p^{1/24} (\log R) (\log \log R)^{1/6}  + H^{-1/4}R \log R.
\end{align*}
Again,  the first term always dominates  and  we obtain~\eqref{eq:B2}, 
which completes the proof.

\section*{Acknowledgement}

During the preparation of this work, the  authors   were partially supported by the
Australian Research Council Grant DP230100530.

\end{document}